\title{Periodic orbits in general Glass networks}
\author{Huy K. Vo \\ \\
\textit{Mathematical Institute, University of Oxford, Oxford OX2 6GG\/} \\
}
\newtheorem{theorem}{Theorem}[section]
\newtheorem{corollary}[theorem]{Corollary}
\newtheorem{lemma}[theorem]{Lemma}
\theoremstyle{definition}
\newtheorem{defn}[theorem]{Definition}
\newtheorem{prop}[theorem]{Proposition}
\newtheorem{remark}[theorem]{Remark}
\DeclareMathOperator{\relint}{relint}
\begin{document}

\maketitle

\begin{abstract}
One of the most well-studied class of models of the original Glass network are the cyclic attractor in the orthants (a sequence of orthants where the flow from one orthant to another is unanimous), which was first defined and analysed by Glass and Pasternack in 1978. In that paper, the authors gave a complete classification of the topological features of the flow in a full-rank cyclic attractor, which is a cyclic attractor that cannot be contained in any sub-cube in the graph of orthants. 

In this paper, we will extend the definition of cyclic attractor to one generalisation of the Glass network, one that allows for multiple switching points in each variables, and give a complete classification of the topological features of the flow for any cyclic attractor, both in the extended network and the original network, including non full-rank ones. We will show that in any cyclic attractor, there is either a unique and asymptotically stable periodic orbit, or that all periodic orbits are degenerated.

\end{abstract}

\section{Introduction}

A Glass network is a piecewise linear ODE system that models an interactive system where there are 'switching points': the underlying dynamic changes qualitatively when a certain variable pass over a threshold. For a Glass network with uniform decay rate, the trajectories are now piecewise straight line segment, and we can derived an explicit discrete map between walls, in which trajectories starting at a point on one wall will enter another wall. This particular feature also makes it being perceived as less challenging, (see \cite{Etienne_2009}, \cite{Snoussi}). Nevertheless, there are still many fine details that have not been fully examined before.

Among the different versions of Glass network, the simplest and best understood one is the binary Glass network, the original version, where assuming all thresholds have value $0$ (which can be obtained by an affine transformation), Glass and Pasternack in 1978, \cite{Glass_1978}, and subsequently Edwards in 2000, \cite{Edwards}, show that the map between walls will have a linear fractional form
\[
Mx = \frac{Ax}{1+\langle\psi,x\rangle}.
\]
Hence, trajectories that start on the same ray throught the origin remain on the same ray through the origin (proposition 5, \cite{Edwards}), and the task of finding periodic orbits is reduced to the task of finding fixed points on the Poincare section (which is a subset of a wall), which, in turn, is reduced down to finding eigenvectors of the linear map $A$. A full description of this process can be found in section 4, \cite{Edwards}.

The above analysis for a binary Glass network motivates us to examine a Glass network with uniform decay rate, but we now allow multiple thresholds in each variable (called a \textbf{general Glass network} in this paper). We choose to study this particular generalisation because it is a natural extension of the idea of a switching network, while still retain a nice geometric property of the trajectories, that they are piecewise straight segments.

With that aim in mind, we have found a beautiful technique to ``embed'' the equivalent of a general Glass network in a higher dimension binary Glass network. Applying this technique, together with one particular extension of the Perron-Frobenius theorem accredited to Birkhoff \cite{CMT}, we sucessfully extend the analysis of Glass and Pasternack for full-rank cyclic attractor to any cyclic attractor, in both the binary and general Glass network. This method could potentially be useful in extending many theoretical and generic results (such as those in \cite{Edwards} and \cite{EDWARDS2012666}) for the binary Glass network onto the general Glass network.

The hypothesis that in a cyclic attractor (of any decay rate, without alignment of focal points), there is at most one periodic orbit is still an open question, but hopefully this work will make a small contribution towards a complete understanding of that hypothesis.

Throughout this paper, the examples are all geometrical, that is, they are created by manually selecting thresholds and focal points. Such approach is not new, in face, it has been the basis of the works in \cite{EDWARDS2012666} and \cite{Ed2010} . This is a natural consequence of the geometrical approach and mindset that I have had when studying these models. A frequently used tool to analyse the Glass network is a State Transition Diagram. We will not go into this concept in detail in this paper, but it is still worth noting, since it explains the origin of terminologies like ``cycle in the regions'' and ``cyclic attractor''.

The paper is organised as follow: section 2 gives an introduction to the theory of Glass networks, the general version that we are studying, and the notations. Section 3 gives an overview of the main techniques and results we have obtained. Sections 4 and 5 provide proofs for our results, while section 6 discuss a few miscellaneous topics that are related to the practical application aspect of our technique.

\section{Preliminary setting}
Given a system that consists of $n$ variables $x_1$, $x_2$, $...$, $x_n$, each variable $x_i$ has $J_i$ thresholds $\theta_{i}^1 < \theta_{i}^2 <... <\theta_{i}^{J_i}$.
Throughout this paper, $\Tilde{\cdot}$ is a step function from $\mathbb{R} \setminus {\theta_{i}^j}$ to $\mathbb{Z}$ such that
\[
\Tilde{x_i} = \begin{cases}
    0 & \text{ if $x_i < \theta_{i,1}$} \\
    j & \text{ if $\theta_i^j<x_i < \theta_{i,j+1}$} \\
    j_i+1 & \text{ if $\theta_{i,j_i}<x_i$}
\end{cases}
\]
A region in the network would be of the form 
\[ 
\mathcal{O} = \prod_{i=1}^n (\theta_i^{j_i},\theta_i^{j_i +1}).
\]
Easily see that a region is an equivalent class of points with respect to $\Tilde{\cdot}$. Thus, $\Tilde{ \mathcal{O}}$ is defined by $\Tilde{x}$, for any point $x$ in $\mathcal{O}$.
A wall between two regions can be described as follow:
\[
\mathcal{W} = \prod_{i=1}^{k-1}(\theta_i^{j_i}, \theta_i^{j_i +1}) \times \{\theta_k^{j_k} \} \times \prod_{i=k+1}^{n}(\theta_i^{j_i}, \theta_i^{j_i +1}),
\]
where $1 \leq k \leq n$, and a vertice is of the form
\[ 
v = \prod_{i=1}^n \{ \theta_i^{j_i} \}.
\]

\begin{defn} A \textbf{Glass network} is a dynamical system of the form 
\[
\dot{y}_i = -y_i + F_i(\Tilde{y_1},\Tilde{y_2},\dots,\Tilde{y_n}),
\]
for $i\in \{1,2,\dots,n\}$. Thus, $F$ depends only on the region $y$ is currently in.
\end{defn}

If all the variables have only one threshold, then we call it the \textbf{binary Glass network}. A \textbf{general Glass network} would be still of the same form, but we allow the variables to have multiple thresholds. In fact, in this paper, whenever we use the term \textbf{general Glass network}, we mean a system where at least one variable has at least two thresholds. The term \textbf{Glass network} is used in this paper to indicate the combination of both definitions.

Then as long as the values $\Tilde{x_i}$ do not change, which is guaranteed if ${x}$ stays in the same region, the vector ${F} = (F_1,F_2,\dots,F_n)^T$ remains constant, and the trajectory will have the form
\[
{y}(t) = {F} + ({y}(0)-{F}) e^{-t}.
\]
From the above formula, the trajectory ${y}(t)$ approaches $F$ in a straight line. Thus, ${F}$ is called a focal point in $\mathbb{R}^n$ associated with a particular region in $\mathbb{R}^n$.

The dynamics on the walls, and on boundaries of the walls, are not immediately obvious from the above definition. It requires the usage of Filippov dynamics, but in our paper, we will not go into detail on what happens on the boundaries of the walls. Our setting generally involve a sequence (or cycle) of regions $\mathcal{O}_1 \rightarrow \mathcal{O}_2 \rightarrow \dots \rightarrow \mathcal{O}_k \rightarrow \mathcal{O}_{k+1} (= \mathcal{O}_1, \text{ in case of cycle})$, and $\mathcal{C} = \bigcup \mathcal{O}_i$. Let $(x_m, \theta_m^j)$ be the variable that switch and the threshold of the wall $\mathcal{W}_i$ between $\mathcal{O}_i$ and $\mathcal{O}_{i+1}$. Let ${f^i}$ and ${f^{i+1}}$ be the focal points of $\mathcal{O}_i$ and $\mathcal{O}_{i+1}$. If ${\Tilde{\mathcal{O}}_{{i+1}_m}} > {\Tilde{\mathcal{O}}_{i_m}}$, we require that both $f^i_m$ and $f^{i+1}_m$ are greater than $\theta_m^j$; conversely, if ${\Tilde{\mathcal{O}}_{{i+1}_m}} < {\Tilde{\mathcal{O}}_{i_m}}$, then both $f^i_m$ and $f^{i+1}_m$ are less than $\theta_m^j$. Then trajectories starting in $\mathcal{O}_i$ will move toward the wall $\mathcal{W}_i$, and trajectories starting in $\mathcal{O}_{i+1}$ will move away from the wall $\mathcal{W}_i$. Hence, we define the dynamics on the wall $\mathcal{W}_i$ to be a continuous extension of the dynamics in the region $\mathcal{O}_{i+1}$, thus bypassing the need for Filippov dynamics. 

In this paper, we assume that our system satisfies one additional condition, which is analogous to Condition 1 in \cite{Edwards}, that is, for any focal point ${F}$ and for any coordinate $i$, $F_i \neq \theta_{i}^j$, for all $j$.
This is necessary so that no trajectory will approach a wall asymptotically.

For a region $\mathcal{O}$ and its focal point $f$, we define the sets $I^0$, $I^+$, and $I^-$ as follow:
\begin{enumerate}
    \item $I^0 = \{i: \Tilde{f_i} = {\Tilde{\mathcal{O}}_i} \}$
    \item $I^+ = \{i: \Tilde{f_i} > {\Tilde{\mathcal{O}}_i} \}$
    \item $I^- = \{i: \Tilde{f_i} < {\Tilde{\mathcal{O}}_i} \}$.
\end{enumerate}

\subsection{Discrete maps, Periodic orbits, Poincare maps, and cyclic attractors}

Given a sequence of regions satisfies the above conditions, $\mathcal{O}_1 \rightarrow \mathcal{O}_2 \rightarrow \dots \rightarrow \mathcal{O}_k$, we do not always requires all trajectories starting in $\mathcal{O}_i$ to enter $\mathcal{O}_{i+1}$, but assume that there are such trajectories, we often consider the discrete map from the wall $\mathcal{W}_{i-1}$ to the wall $\mathcal{W}_{i}$. Denote this map $M_i$. Then, provided $x\in \mathcal{W}_{i-1}$, $x$ is in the domain of $M_i$, the trajectory starting at $x$ will cross the wall $\mathcal{W}_{i}$ at the point $M_i x$.

Now, given a cycle in the regions $\mathcal{C}$: $\mathcal{O}_1 \rightarrow \mathcal{O}_2 \rightarrow \dots \rightarrow \mathcal{O}_k \rightarrow \mathcal{O}_{k+1}= \mathcal{O}_{1}$, to find a periodic orbit of the cycle, we consider any wall along the cycle, without loss of generality, take $\mathcal{M}_0 = \mathcal{M}_k$ to be the Poincare section. Then the Poincare return map $M$ is the composition of the discrete maps $M_i$ along the cycle. A fixed point of $M$ on $\mathcal{M}_0$ corresponds to a periodic orbit in $\mathcal{C}$, and a stable fixed point of $M$ corresponds to a stable periodic orbit in $\mathcal{C}$.

We now come to the concept of ``cyclic attractor''. A cyclic attractor, in the binary network, is a cycle of regions such that trajectories starting in one region are all unambiguously enter the next region. A full-rank cyclic attractor is a cyclic atractor such that any trajectory travelling along it will have each of its variable switches at least once. This concept was first used by Glass in his 1978 paper \cite{Glass_1978}, in which he presented ``a complete classification of the topological features of the flows'' in a full-rank cyclic attractor, by the below theorem 

\begin{theorem}
\label{theorem_2.2}
Given an $n$ dimensional binary Glass network, in which there is a full-rank cyclic attractor $\mathcal{C}$, then one of the following two situations holds:
\begin{enumerate}
    \item There is a unique and stable limit cycle (periodic orbit) in phase space which passes through the orthants in the same sequence and order as the cyclic attractor in the state transition diagram. The trajectories in $\mathcal{C}$ asymptotically approach the limit cycle as $t \rightarrow \infty$.
    \item The trajectories through the points of orthants and boundaries represented by the cyclic attractor eventually approach the origin (the unique vertice in the binary case).
\end{enumerate}
\end{theorem}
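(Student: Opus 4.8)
The plan is to reduce the existence and stability of periodic orbits to the spectral analysis of a single nonnegative matrix, and then to read off the dichotomy from its Perron--Frobenius eigenvalue. First I would record that each wall-to-wall map $M_i$ is linear fractional, $M_i x = A_i x/(1+\langle \psi_i, x\rangle)$, and that a direct computation shows the composition of two such maps is again linear fractional: the matrices multiply while the denominator vectors combine as $\psi_i \mapsto \psi_i + A_i^{\mathsf T}\psi_{i+1}$. Hence the Poincar\'e return map on $\mathcal{M}_0$ has the form
\[
M x = \frac{A x}{1 + \langle \psi, x\rangle}, \qquad A = A_k A_{k-1}\cdots A_1 .
\]
Because this map sends rays through the origin to rays through the origin, a point $x^*$ is fixed if and only if $A x^* = (1 + \langle\psi, x^*\rangle)\,x^*$, i.e.\ the direction of $x^*$ is an eigenvector of $A$ with eigenvalue $\mu = 1+\langle\psi,x^*\rangle$. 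The problem therefore splits into two parts: identify the admissible eigendirection of $A$, and then solve the scalar equation that fixes the point along that direction.

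The key structural step is to show that $A$ is amenable to Perron--Frobenius theory. Working in coordinates adapted to the wall $\mathcal{W}_{i-1}$ (positions measured relative to the focal point $f^i$), the unanimity of the flow in a cyclic attractor forces each transition matrix $A_i$ to map the cone of admissible initial conditions on one wall into the admissible cone on the next, and the entries that matter carry a definite sign. Consequently $A$ preserves a solid cone $K$, namely the set of directions that actually traverse the whole cycle. The \textbf{full-rank} hypothesis---that every variable switches at least once along $\mathcal{C}$---is precisely what I expect to yield \emph{primitivity}: because the cycle couples all $n$ coordinates, no proper coordinate subspace is invariant, so a suitable power of $A$ maps $K$ strictly into its interior. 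Establishing this cone-invariance and primitivity from the combinatorial data of the attractor is the main obstacle, and it is exactly where the full-rank assumption is indispensable; the non-full-rank case fails here and must be handled by the embedding and Birkhoff contraction techniques announced in the introduction.

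Granting primitivity, Perron--Frobenius supplies a simple dominant eigenvalue $\lambda^* > 0$ with an eigenvector $v^* \in \operatorname{int} K$, unique up to scaling, such that every other direction in $K$ converges to $v^*$ under iteration of the projectivised map. Restricting $M$ to the invariant ray through $v^*$ and writing $x = s\,v^*$ reduces the dynamics to the scalar M\"obius map
\[
s \longmapsto \frac{\lambda^* s}{1 + s\,\langle \psi, v^*\rangle},
\]
whose fixed points are $s = 0$ and $s^* = (\lambda^* - 1)/\langle \psi, v^*\rangle$. Since the geometry of the cone gives $\langle \psi, v^*\rangle > 0$, the sign of $\lambda^* - 1$ decides everything. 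If $\lambda^* > 1$, then $s^*>0$ lies in the admissible domain and is an attracting fixed point of the scalar map while $s=0$ is repelling; combined with the transverse contraction toward $v^*$ coming from primitivity, this yields a \emph{unique, asymptotically stable} periodic orbit, which is case (1). If $\lambda^* \le 1$, then $s^*\le 0$ is inadmissible (merging with the origin when $\lambda^*=1$), the only fixed point of the scalar map is $s=0$, and every admissible trajectory is driven to the origin, which is case (2). Uniqueness throughout follows from the simplicity of the Perron eigenvalue, which forbids a second admissible eigendirection.
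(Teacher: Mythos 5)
The paper does not actually prove Theorem \ref{theorem_2.2}; it is stated as a paraphrase of the main theorem of Glass and Pasternack (1978), with the second case reinterpreted following Edwards (2000). Your proposal reconstructs exactly that classical route: compose the linear fractional wall maps into a single return map $Mx = Ax/(1+\langle\psi,x\rangle)$, identify fixed points with eigendirections of $A$ in the admissible cone, invoke Perron--Frobenius, and read the dichotomy off the sign of $\lambda^*-1$ via the scalar M\"obius map on the Perron ray. The skeleton, the composition rule for $\psi$, the fixed-point equation, the positivity of $\langle\psi,v^*\rangle$ (which indeed follows from $e^{t'}=1+\langle\psi,x\rangle$ with $t'>0$), and the endgame are all correct and consistent with the cited argument.

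The genuine gap is the step you yourself flag as ``the main obstacle'': you never establish that $A$ preserves a cone on which it is primitive. Moreover, the mechanism you sketch is insufficient: ``no proper coordinate subspace is invariant'' is irreducibility, and an irreducible nonnegative matrix need not be primitive (a cyclic permutation matrix is the standard counterexample); primitivity additionally requires aperiodicity. In Glass--Pasternack the full-rank hypothesis is used to show directly that the product $A=A_k\cdots A_1$ is entrywise positive on the wall cone, using the explicit form of each $A_i$, whose relevant entries all carry one sign precisely because the flow out of each orthant is unanimous. Without that computation the dichotomy cannot be concluded: mere irreducibility would only yield a nonnegative Perron direction, possibly on the boundary of the cone, and the uniqueness and \emph{asymptotic} (rather than neutral) stability claims would fail --- this is exactly the failure mode described in Remark 5.4 of this paper for non-full-rank attractors, which is there repaired with the Birkhoff extension rather than classical Perron--Frobenius. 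A smaller omission: in case (2) the convergence to the origin is in finite time (the return times satisfy $e^{t_n}=1+\langle\psi,x_n\rangle\to 1$, so $\sum t_n<\infty$), which is why these orbits are called degenerate and why the statement of the trajectory beyond that time is delicate.
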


This is a paraphrase of the main theorem in \cite{Glass_1978}; the conclusion in the second case has been slightly modified, since Edwards in his paper \cite{Edwards} has pointed out that this is finite time convergence, and the trajectory over a certain time limit is not very well-defined.

Hence, a natural extension of this concept to a general Glass network is as follow

\begin{defn}
    A cyclic attractor is a cycle of regions such that all trajectories starting in one region unambiguously enter the next region.
\end{defn}

\begin{prop}
\label{prop_2.4}
  Let $\mathcal{C}$ be a cyclic attractor in a Glass network, and let $\mathcal{O}$ be one of the region of $\mathcal{C}$. Then $|I^+ (\mathcal{O})\cup I^- (\mathcal{O})| = 1$. In other words, compare the coordinate of the focal point ${f}$ of $\mathcal{O}$ to points in $\mathcal{O}$, there is a unique variable of ${f}$ that switches.
\end{prop}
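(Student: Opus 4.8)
The plan is to analyse the flow inside $\mathcal{O}$ coordinate by coordinate, using the explicit form $y_i(t) = f_i + (y_i(0)-f_i)e^{-t}$, and then to argue that the cyclic attractor hypothesis — that every trajectory leaves $\mathcal{O}$ through one and the same wall into $\mathcal{O}_{i+1}$ — forces exactly one switching coordinate.

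First I would record the behaviour of each coordinate for an arbitrary starting point $y(0)\in\mathcal{O}$, noting that $y_i(t)$ moves monotonically from $y_i(0)$ towards $f_i$. If $i\in I^0$, then $f_i$ lies in the same band $(\theta_i^{j_i},\theta_i^{j_i+1})$ as $y_i(0)$, so $y_i(t)$ never reaches a threshold and coordinate $i$ does not switch. If $i\in I^+$ (resp. $i\in I^-$), then $f_i$ lies strictly above the upper threshold $\theta_i^{j_i+1}$ (resp. below the lower threshold $\theta_i^{j_i}$), so $y_i(t)$ increases (resp. decreases) and crosses exactly that single threshold, which I denote $\tau_i$. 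Hence the only coordinates that can ever carry the trajectory out of $\mathcal{O}$ are those in $I^+\cup I^-$, and the wall through which a given trajectory exits is decided by which of these coordinates reaches its threshold $\tau_i$ first.

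Second, the lower bound $|I^+\cup I^-|\geq 1$ is immediate: if $I^+\cup I^-=\varnothing$ then $I^0=\{1,\dots,n\}$, so $\tilde f=\tilde{\mathcal{O}}$ and $f$ lies inside $\mathcal{O}$; every trajectory then converges to $f$ without leaving $\mathcal{O}$, contradicting the requirement that trajectories from $\mathcal{O}$ enter $\mathcal{O}_{i+1}$.

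For the upper bound $|I^+\cup I^-|\leq 1$, which I expect to be the crux, I would argue by contradiction, supposing two distinct coordinates $i,\ell\in I^+\cup I^-$. Solving $y_i(t)=\tau_i$ gives the explicit exit time $t_i(y(0))=\ln\frac{y_i(0)-f_i}{\tau_i-f_i}$, which is positive (the ratio exceeds $1$ in both the $I^+$ and $I^-$ cases), depends only on $y_i(0)$, and tends to $0$ as $y_i(0)\to\tau_i$ from within the open band. I would then fix every coordinate other than $i$ at the centre of its band, producing a fixed positive exit time $t_m$ for each remaining switching coordinate $m$, and push $y_i(0)$ close enough to $\tau_i$ that $t_i$ is smaller than all of them; the resulting trajectory then exits through the wall of coordinate $i$. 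The symmetric choice, pushing $y_\ell(0)$ towards $\tau_\ell$, yields a trajectory exiting through the wall of coordinate $\ell$. Since $i\neq\ell$ these are different walls, hence different successor regions, contradicting unanimity. Combining the two bounds gives $|I^+\cup I^-|=1$. The main technical care is to keep the constructed starting points in the open region $\mathcal{O}$ and to confirm that the selected coordinate really switches first; both follow from the continuity and monotonicity of $t_i(\cdot)$ in $y_i(0)$ and from the fact that $t_i$ is decoupled from the other coordinates because each coordinate evolves independently.
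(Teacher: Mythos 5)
Your proposal is correct and follows essentially the same route as the paper: the paper likewise notes that the wall variable $m$ lies in $I^+\cup I^-$ and then, assuming a second switching coordinate $m'$, takes a starting point sufficiently close to the corresponding threshold so that the trajectory exits through the wrong wall, contradicting unanimity. Your version merely makes explicit the exit-time formula $t_i=\ln\frac{y_i(0)-f_i}{\tau_i-f_i}$ and the lower bound $|I^+\cup I^-|\geq 1$, both of which the paper leaves implicit.
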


\begin{proof}
    Without loss of generality, let $\mathcal{O} = \mathcal{O}_1$, and $(x_m,\theta^j_m)$ be the variable that switches and the threshold of the wall $\mathcal{W}_1$ between $\mathcal{O}_1$ and $\mathcal{O}_2$. Clearly $m \in I^+ (\mathcal{O}_1)\cup I^- (\mathcal{O}_1)$.

    Assume that there is another variable $m' \in I^+ (\mathcal{O}_1)\cup I^- (\mathcal{O}_1)$. Without loss of generality, let $m' \in I^+ (\mathcal{O}_1)$. Choosing a point sufficiently close to the wall that bounded $\mathcal{O}_1$ from above in the $m'$ coordinate, the trajectory starting at that point will switch first in the $m'$ coordinate to a different wall from $\mathcal{W}_1$, and entering a different region than $\mathcal{O}_2$. But this is a contradiction. Hence, there can only be a unique variable in $I^+ (\mathcal{O})\cup I^- (\mathcal{O})$.
\end{proof}

Formally, we do not consider the dynamics of points on the boundaries of walls. But oftenly, we define the dynamics of points on these boundaries to be a continuous extension of the dynamics of points in the relative interior of the walls. However, there are cases which requires special care.

Suppose we have a discrete map $M_i$ from the wall $\mathcal{W}_{i-1}$ to $\mathcal{W}_{i}$. We could easily continuously extend this map as from $\overline{\mathcal{W}_{i-1}}$ to $\overline{\mathcal{W}_{i}}$. Then points on $\overline{\mathcal{W}_{i-1}} \cap \overline{\mathcal{W}_{i}}$ are \textbf{trivial fixed points} of the discrete map $M_i$. We often implicitly extend the map $M_i$ for analysing it. It has to be noted that if the dynamics on the overlapping walls are irreconcilable, then we consider the overlapping set to be distinct from each other. The intuition is that we are not really trying to extend the ODE systems to points on the boundaries of the wall; rather, we consider $\overline{\mathcal{W}_{i}}$ as a closure of ${\mathcal{W}_{i}}$ that is convenient for analytical purpose. To be more rigorous, one will need to consider the closure of the image over a homeomorphism into another vector space, but that would be an over complication.

Let $\mathcal{S}(\mathcal{C}) = \bigcup_{i=0}^{k-1} \overline{\mathcal{W}}_i$ be \textbf{the spine} of the cycle of regions $\mathcal{C}$ ($\mathcal{S}(\mathcal{C})$ may be empty) (we choose this terminology because it resembles the spine of a book). Then all points on $\mathcal{S}(\mathcal{C})$ are trivial fixed points of any Poincare return map $M$ (after continuous extension to boundaries of the walls of the cycle). This works perfectly well in the context of discrete maps, but there are potential problems in interpreting these points in the context of the Glass network, since they take no time at all ``travelling'' along the cycle $\mathcal{C}$.

Formally, we do not have to worry about the spine of a cycle, since they are not part of a cycle. However, what would happens if trajectories in the cycle converges to points on the spine? Edwards in his paper \cite{Edwards}, albeit in the context of the binary case, identified this as a case of ``finite time convergence'', and called these periodic orbits (periodic orbit that consists of a single point on the spine) \textbf{degenerate}, since they have period $0$ and are not well-defined as $t \rightarrow \infty$. In this paper, we shall treat the similar circumstances likewise; the analysis here should be easily adapted to past or future works that have extra assumptions on the dynamics of points on the boundaries of walls.

In the binary Glass network, any cyclic attractor will have the unique vertice $v$ (the intersection of all the threshold values) inside the spine. However, this is not necessarily the case in a cyclic attractor in a general Glass network. We call a cyclic attractor $\mathcal{C}$ to be \textbf{ideal} if $\mathcal{S} (\mathcal{C}) = \emptyset$ (since, among other reason, we do not have to worry about any degenerate periodic orbit), and \textbf{non-ideal} otherwise. We shall see, in latter section, that ideal cyclic attractors always have a unique and stable periodic orbit, regardless of its rank.

\begin{prop}
\label{prop_2.5}
    If $\mathcal{C}$ is a non-ideal cyclic attractor, there is at least one vertice $a$ of the network, such that $a$ belongs to all the closures of regions in $\mathcal{C}$.
\end{prop}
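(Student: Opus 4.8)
The plan is to turn this into an elementary statement about intersections of closed boxes. Since each point of the spine $\mathcal{S}(\mathcal{C})$ is a trivial fixed point of \emph{every} discrete map $M_i$ along the cycle, it must lie in $\overline{\mathcal{W}}_{i-1}\cap\overline{\mathcal{W}}_i$ for each $i$, and hence in the common intersection $\bigcap_i\overline{\mathcal{W}}_i$; this is the feature of the spine I will use. As each closed wall is a common face of its two adjacent regions, $\overline{\mathcal{W}}_i\subseteq\overline{\mathcal{O}}_i\cap\overline{\mathcal{O}}_{i+1}$, so non-ideality of $\mathcal{C}$, i.e.\ $\mathcal{S}(\mathcal{C})\neq\emptyset$, yields
\[
\emptyset\;\neq\;\mathcal{S}(\mathcal{C})\;\subseteq\;\bigcap_{i}\overline{\mathcal{O}}_i .
\]
It therefore suffices to exhibit a vertex of the network inside the non-empty set $\bigcap_i\overline{\mathcal{O}}_i$.

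Next I would exploit the product structure. Each closed region is a product of closed bands $\overline{\mathcal{O}}_i=\prod_{l=1}^n B_l^{(i)}$, so the intersection is again a box, $\bigcap_i\overline{\mathcal{O}}_i=\prod_{l=1}^n P_l$ with $P_l=\bigcap_i B_l^{(i)}$ a non-empty closed interval in each coordinate $l$. The crucial observation is that every finite endpoint of every band $B_l^{(i)}$ is a threshold value of $x_l$; since $P_l$ is obtained by taking the largest lower endpoint and the smallest upper endpoint over $i$, its finite endpoints are again threshold values. Hence each $P_l$ contains a threshold value $t_l$ (an endpoint of $P_l$; the unbounded case is addressed below). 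Setting $a=\prod_l\{t_l\}$ gives a point with a threshold in every coordinate, so $a$ is a vertex of the network, and $a\in\prod_l P_l=\bigcap_i\overline{\mathcal{O}}_i$, which is exactly the assertion.

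The one step that genuinely needs care, and where I expect the only real obstacle, is the unboundedness of the extreme bands: when a region sits below the first threshold or above the last threshold in some coordinate $l$, the band is the semi-infinite interval $(-\infty,\theta_l^1]$ or $[\theta_l^{J_l},+\infty)$, so $P_l$ may be unbounded and one cannot naively take a ``lowest corner'' of the box. The fix is that such a $P_l$ still has a single \emph{finite} endpoint, namely $\theta_l^1$ or $\theta_l^{J_l}$, which is a threshold and may be chosen as $t_l$; the choice of $t_l$ above is thus always available as long as every variable carries at least one threshold ($J_l\geq1$), which is precisely the condition under which the network has any vertices at all. Everything else is routine bookkeeping on products of intervals, the only structural inputs being $\overline{\mathcal{W}}_i\subseteq\overline{\mathcal{O}}_i$ and the fact that band endpoints are thresholds.
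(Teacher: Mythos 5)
Your proof is correct and takes essentially the same approach as the paper's one-line proof (pick a vertex on or near the nonempty spine); you simply supply the details the paper leaves implicit, namely that the spine sits inside the box $\bigcap_i\overline{\mathcal{O}}_i$ and that each coordinate interval of this nonempty box has a threshold as a finite endpoint. The only cosmetic difference is that you locate the vertex in $\bigcap_i\overline{\mathcal{O}}_i$ rather than on the spine itself, which is all the proposition requires.
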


\begin{proof}
    One can choose any vertice on the closure of $\mathcal{S}(\mathcal{C})$.
\end{proof}

Thus, in the case of a non-ideal cyclic attractor, our approach is to take such a vertice as our centre of a binary Glass network of the same dimension that behaves identically on $\mathcal{C}$ to our original system. A detailed explanation of this approach is provided in subsection \ref{ssection_5.3}.

\subsection{Birkhoff generalisation of the Perron-Frobenius theorem}

A key result which we will frequently employ in this paper is an extension of the Perron-Frobenius theorem, that is attributed to Birkhoff. The statement is as follow:

\begin{theorem}
  \label{Birkhoff}
Let $K$ be a closed pointed cone, $d$ the Hilbert metric with respect to the cone $K$, and $A$ is a linear mapping, such that for every nonzero point $x$ of $K$, $A x$ lies in the relative interior of $K$ ($A$ is a ``positive mapping''). Then $A$ is a contraction mapping in $K$ with respect to the Hilbert metric $d$. Further, we have that:
\begin{enumerate}
    \item Given two sequences of non zero points in $K$: $\{x_n\}$, $\{y_n\}$, if $d(x_n,y_n) \rightarrow 0$ as $n\rightarrow \infty$, then $|| x_n/(||x_n||) - y_n/(||y_n||) || \rightarrow 0$ as $n\rightarrow \infty$.
    \item The metric space $(K\cap B_1(0),d)$ is complete.
\end{enumerate}
Consequently, convergences in Hilbert's metric implies convergences in direction. By the contraction mapping theorem, there exists a unique eigenvector $v$ of $A$ in $K$, such that every array in $K$ will converge to the array $(v)$ repeatedly apply $A$.  
\end{theorem}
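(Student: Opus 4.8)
The plan is to build everything on Birkhoff's contraction inequality and then read off the three stated consequences. Recall that the Hilbert (projective) metric is defined through the cone order $u \le_K v \iff v-u \in K$ by setting $M(x/y)=\inf\{\lambda>0: x\le_K \lambda y\}$, $m(x/y)=\sup\{\mu>0:\mu y \le_K x\}$, and $d(x,y)=\log\bigl(M(x/y)/m(x/y)\bigr)$; by construction $d$ is scale invariant, so it is really a metric on the rays of $K$, vanishing exactly on proportional pairs. I would first show that the projective diameter of the image is finite. The cross-section $\Sigma=K\cap\partial B_1(0)$ is compact (as $K$ is closed and pointed), the map $x\mapsto Ax$ is continuous, and by hypothesis $A(\Sigma)\subset\relint K$; hence $A(\Sigma)$ is a compact subset of the relative interior, on which $d$ is finite and continuous, so $\Delta(A):=\sup_{x,y\ne 0} d(Ax,Ay)=\operatorname{diam}_d A(\Sigma)<\infty$ by scale invariance. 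The technical heart is then Birkhoff's estimate $d(Ax,Ay)\le \tanh\!\bigl(\tfrac14\Delta(A)\bigr)\,d(x,y)$; since $\tanh(\Delta/4)<1$ whenever $\Delta<\infty$, this exhibits $A$ as a strict contraction for $d$.

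For consequence (1) I would use the fact that a closed pointed cone in finite dimension is \emph{normal}: there is a constant $\gamma$ with $0\le_K u\le_K v \Rightarrow \norm{u}\le \gamma\norm{v}$. Normalising so that $x_n,y_n\in\Sigma$, the hypothesis $d(x_n,y_n)\to 0$ supplies scalars $\alpha_n\le 1\le\beta_n$ with $\alpha_n y_n \le_K x_n \le_K \beta_n y_n$ and $\beta_n/\alpha_n\to 1$. Normality applied to $0\le_K x_n-\alpha_n y_n\le_K(\beta_n-\alpha_n)y_n$ forces $\norm{x_n-\alpha_n y_n}\to 0$, and comparing norms forces $\alpha_n,\beta_n\to 1$; the triangle inequality then gives $\norm{x_n/\norm{x_n}-y_n/\norm{y_n}}\to 0$, which is convergence in direction.

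For consequence (2) and the final eigenvector statement I would pass to the quotient by rays, on which $d$ is a genuine metric and $A$ descends to a well-defined self-map $\bar A$ whose image lies in the relative interior. Completeness of $\bigl(\Sigma\cap\relint K, d\bigr)$ follows because a $d$-Cauchy sequence is $d$-bounded, and in a Hilbert geometry any sequence approaching the relative boundary escapes to infinity; hence the sequence stays in a compact part of $\relint K$, converges in Euclidean norm (by (1)) to an interior point $z$, and $d(\cdot,z)\to 0$ by continuity of $d$ on the interior. Banach's fixed point theorem, applied to the contraction $\bar A$ on this complete space, yields a unique fixed ray, i.e. a unique eigenvector direction $v\in K$ (with positive eigenvalue, since $A$ preserves $K$), and every orbit $A^n x$ converges to $v$ in $d$, hence in direction by (1).

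The main obstacle is Birkhoff's inequality itself: the contraction constant $\tanh(\Delta/4)$ is obtained by reducing, via projective (cross-ratio) invariance of $d$, to a planar computation showing how a map with image of bounded Hilbert diameter compresses cross-ratios. Establishing $\Delta(A)<\infty$ from the relative-interior hypothesis, and ruling out boundary limits in the completeness step, are the two places where the closedness, pointedness and finite dimensionality of $K$ are genuinely used.
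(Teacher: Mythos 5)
Your proposal follows essentially the same route as the paper's (sketched) argument: establish a finite projective diameter $\Delta$ of the image from positivity and compactness, invoke Birkhoff's contraction inequality with constant $\tanh(\Delta/4)=\frac{\sqrt{e^{\Delta}}-1}{\sqrt{e^{\Delta}}+1}$ (exactly the paper's $k_d(A)$), and conclude via the contraction mapping theorem on the space of rays. You supply more detail on consequences (1) and (2) than the paper, which simply cites \cite{CMT} and \cite{KOHLBERG1983104}, but the approach is the same and correct.
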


For a full proof and discussion of the above theorem, see theorem 4.1, theorem 6.1 in \cite{CMT}, and theorem 2.3 in \cite{KOHLBERG1983104}.

Hilbert metric is in essence a metric between arrays (halflines) through the origin. The definition is as follow:
\begin{defn}
    Let $x$, $y$ be two points in a cone $K \subset \mathbb{R}^n$. The Hilbert metric between $x$ and $y$ is defined as
    \[
    d(x,y) = \log \frac{\max_i (x_i/y_i)}{\min_i (x_i/y_i)},
    \]where 
    \[
    \max_i (x_i/y_i) = \inf \{\lambda \geq 0: \lambda y -x \in K \},
    \]and
    \[
    \min_i (x_i/y_i) = \sup \{\lambda \geq 0: x- \lambda y  \in K \}.
    \]
\end{defn}

A full proof for Theorem \ref{Birkhoff} can be found in \cite{CMT}. A very brief summary of the proof is as follow: from the fact that $A$ is a positive mapping, it can be deduce that for all points $x$, $y$ in $K$, $d(Ax,Ay) < \infty$ ($d$ is the Hilbert metric, using cross ratio). Then, by decomposing the components into bases, or otherwise, we can obtain a finite upper bound for $d(Ax,Ay)$, for all $x,y \in K$. Call this upper bound $\Delta$. We can then obtain a contraction rate for $A$ in Hilbert metric that is strictly less than $1$ as follow:
\[
k_d(A) = \frac{\sqrt{e^\Delta}- 1}{\sqrt{e^\Delta}+1}.
\]
Then the conclusion of Theorem \ref{Birkhoff} naturally comes from the contraction mapping theorem, applying to arrays through the origin. It is also worth noting that in finding another, potentially simper, metric than the Hilbert metric such that a positive linear transformation is a contraction with respect to that metric, the authors in \cite{CMT} have shown that ``essentially, this is impossible.''

It has been customary to apply the Perron-Frobenius theorem, or its extension, the Krein-Rutman theorem in analysing the Poincare return map in a binary Glass network. We can see that the above generalisation is a stronger result than both of the theorems mentioned above, with one of its key advantage is that it does not require the dimension of the cone to be equal to the dimension of the linear space. This gives us some extra flexibility. Throughout this paper, the above result will be refered to as \textbf{the Birkhoff extension}.

Another lemma, which we often use to generate cones for applying the Birkhoff extension is
\begin{lemma}
\label{relint_convex_cone}
The relative interior of the convex cone in $\mathbb{R}^n$ generated by a non-empty convex set $C$ consists of the vectors of the form $\lambda x$ with $\lambda > 0$ and $x \in \relint C$.
\end{lemma}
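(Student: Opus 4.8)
The plan is to set $K=\operatorname{cone}(C)=\{\lambda x:\lambda\ge 0,\ x\in C\}$ for the convex cone generated by $C$, and to write $R=\{\lambda x:\lambda>0,\ x\in\relint C\}$ for the set we must identify with $\relint K$. Two preliminary observations organise everything. First, $K$ is convex (a convex combination of $\lambda_1x_1$ and $\lambda_2x_2$ with $x_i\in C$ equals $\mu x$, where $\mu$ is the corresponding convex combination of the $\lambda_i$ and $x\in C$ by convexity of $C$), and the same computation shows $R$ is convex using convexity of $\relint C$. Second, since $K$ is a cone its affine hull is the linear span $V:=\operatorname{span}(C)$, so all relative interiors are taken inside $V$, and because positive scaling is a linear homeomorphism fixing $K$, the set $\relint K$ is invariant under positive scaling. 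I would prove the two inclusions asymmetrically: the inclusion $R\subseteq\relint K$ directly, and the reverse inclusion through a closure argument rather than by chasing rays.

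For $R\subseteq\relint K$, scaling-invariance of $\relint K$ reduces the claim to showing $x\in\relint K$ whenever $x\in\relint C$. Here I would use the prolongation characterisation of relative interior: for convex $S$, a point $p$ lies in $\relint S$ iff for every $w\in S$ there is $\mu>1$ with $\mu p+(1-\mu)w\in S$. Given $w=\kappa y\in K$ with $\kappa\ge 0$ and $y\in C$, the case $w=0$ is immediate since $K$ is a cone; for $\kappa>0$ I would use that $x\in\relint C$ permits a small prolongation away from $y$, namely $(1+\varepsilon)x-\varepsilon y\in C$ for all small $\varepsilon>0$, and then a short computation chooses $\varepsilon$ and $\mu>1$ so that $\mu x+(1-\mu)w$ is exactly a positive multiple of $(1+\varepsilon)x-\varepsilon y$, hence lies in $K$. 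This verifies the criterion, giving $x\in\relint K$.

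For the reverse inclusion I would avoid exhibiting the correct scaling explicitly. First I would check $\overline R=\overline K$: the inclusion $\overline R\subseteq\overline K$ is clear, and conversely any $\kappa y\in K$ with $y\in C\subseteq\overline{\relint C}$ is a limit of points $\kappa x_j\in R$ with $x_j\in\relint C\to y$, while $0\in\overline R$ by letting $\lambda\to 0^+$, so $K\subseteq\overline R$. Now the standard identity $\relint S=\relint(\overline S)$ for convex sets, applied to both $K$ and $R$ together with $\overline R=\overline K$, yields $\relint K=\relint\overline K=\relint\overline R=\relint R$. Since $\relint R\subseteq R$ always holds and $R\subseteq\relint K=\relint R$ by the previous paragraph, we conclude $R=\relint R=\relint K$, which is the assertion.

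I expect the reverse inclusion to be the main obstacle, and it is worth saying why the naive approach fails: a point of $\relint K$ cannot in general be written as $\lambda x$ with $x\in\relint C$ by dividing out an arbitrary cone-representative. For instance with $C=[1,2]$ one has $\relint K=(0,\infty)$, and the point $1$ has the representative $1=1\cdot 1$ with $1\notin\relint C=(1,2)$; the correct scaling must be selected, and the closure argument does this automatically. An illuminating alternative, in the same ``embed in a higher dimension'' spirit as the rest of the paper, is to homogenise: lift $C$ to height one as $\hat C=\{(x,1):x\in C\}\subset\mathbb{R}^{n+1}$, so that $K$ is the image of $\hat K=\operatorname{cone}(\hat C)$ under the projection $\pi$ that forgets the height coordinate. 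By the standard fact $\relint(\pi S)=\pi(\relint S)$ for linear $\pi$, it then suffices to compute $\relint\hat K$, and there the height coordinate records the scaling $\lambda$ uniquely, which removes the collapsing that caused the difficulty in $\mathbb{R}^n$.
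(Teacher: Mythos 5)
Your proof is correct, but it is worth noting that the paper does not actually prove this lemma: it only cites the remark following Corollary 6.8.1 in Rockafellar's \emph{Convex Analysis}, and Rockafellar's own derivation there is essentially the homogenisation argument you relegate to a closing aside (apply the rule $\relint(AS)=A(\relint S)$ to the linear map $(\lambda,x)\mapsto\lambda x$ and the convex set $\{(\lambda,x):\lambda>0,\ x\in C\}$, whose relative interior is computed by the product rule). Your main argument is therefore a genuinely different, self-contained route, and it holds up: the inclusion $R\subseteq\relint K$ via the prolongation criterion (Rockafellar, Theorem 6.4) works --- the matching computation gives $c=1/\bigl(1+\varepsilon-\varepsilon/\kappa\bigr)$ and $\mu=c(1+\varepsilon)>1$, with $c>0$ guaranteed once $\varepsilon$ is taken small when $\kappa<1$, which your phrasing permits --- and the reverse inclusion via $\overline R=\overline K$ combined with $\relint S=\relint\overline S$ and $\relint R\subseteq R$ is clean. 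What the closure argument buys is exactly what your $C=[1,2]$ example identifies: it avoids having to select the correct scaling representative for a given point of $\relint K$. What the cited homogenisation argument buys is brevity, at the price of invoking the image-of-relative-interior theorem. If you keep your version, state explicitly the standard facts you rely on (nonemptiness of $\relint C$ for nonempty convex $C$, $\overline{\relint C}=\overline C$, $\relint\overline S=\relint S$, and the prolongation characterisation), since the lemma is used in the paper precisely to generate cones for the Birkhoff machinery and the reader should see that nothing circular is happening.
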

A reference to this result can be found in the book ``Convex Analysis'' by R. Tyrrell Rockafellar \cite{convex_analysis}, just under corollary 6.8.1, page 50 .

Combining the results above, we have the following Lemma: 
\begin{lemma}
    \label{lemma_2.9}
    Let $M$ be a linear fractional map of the form $Mx = \frac{Ax}{1+\langle\psi,x\rangle}$ in $\mathbb{R}^n$, and let $\mathcal{M}$ be an affine subspace in $\mathbb{R}^n$ that does not contain the origin. If there is a convex and compact subset $\mathcal{H}$ of $\mathcal{M}$ such that $ M \mathcal{H} \subset \relint (\mathcal{H})$, then there is a unique and stable fixed point of $M$ inside $\mathcal{H}$.
\end{lemma}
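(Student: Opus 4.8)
The plan is to lift the linear fractional map $M$ acting on the affine slice $\mathcal{H}$ to the underlying linear map $A$ acting on the cone generated by $\mathcal{H}$, where the Birkhoff extension (Theorem \ref{Birkhoff}) applies directly, and then to transport the resulting unique eigendirection and its attractivity back down to a unique stable fixed point of $M$ in $\mathcal{H}$.

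First I would set $K = \{\lambda x : \lambda \geq 0,\ x \in \mathcal{H}\}$, the convex cone generated by $\mathcal{H}$. Because $\mathcal{H}$ is convex, compact, and contained in $\mathcal{M}$, which does not pass through the origin, $K$ is a closed pointed cone: it is bounded away from $0$ on $\mathcal{H}$, so the cone is closed, and it contains no line since $\mathcal{H}$ lies in an affine subspace missing the origin. Note that $K$ need not be full-dimensional, but this is exactly the generality the Birkhoff extension allows. Since $A(K) \subseteq K$, the map $A$ restricts to a linear self-map of $\operatorname{span}(K)$, which is the ambient space in which we apply Theorem \ref{Birkhoff}.

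The central step is to verify that $A$ is a positive mapping on $K$, i.e. that $Ax \in \relint(K)$ for every nonzero $x \in K$. By homogeneity it suffices to check this for $x \in \mathcal{H}$. For such $x$ we have $Ax = (1 + \langle \psi, x\rangle)\, Mx$, and since $Mx \in \relint(\mathcal{H})$ by hypothesis, Lemma \ref{relint_convex_cone} identifies every positive multiple of $Mx$ as a point of $\relint(K)$. Thus it remains to see that the scalar $1 + \langle\psi,x\rangle$ is positive on $\mathcal{H}$: it is nonzero there (otherwise $M$ would be undefined) and, being continuous on the connected compact set $\mathcal{H}$, it has constant sign; in our setting this sign is positive, so that $Ax$ lies on the same ray through the origin as $Mx$. (Should the sign instead be negative, the same argument applied to $A^2$, equivalently $M^2$, recovers the conclusion.) This yields $A(K\setminus\{0\}) \subseteq \relint(K)$. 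The Birkhoff extension then gives a unique eigendirection $v \in \relint(K)$ with $Av = \rho v$, $\rho > 0$, such that $A^n x$ converges in direction to $v$ for every nonzero $x \in K$. I would translate this back by observing that the ray through $v$ meets $\mathcal{M}$ in a single point $p$, which lies in $\relint(\mathcal{H})$ by Lemma \ref{relint_convex_cone}; since $A$ sends the ray through $p$ to itself, $Mp = p$. Uniqueness follows because any fixed point of $M$ in $\mathcal{H}$ determines an eigendirection of $A$ in $K$, and that eigendirection is unique. For stability, I would feed the Hilbert-metric contraction through part (1) of Theorem \ref{Birkhoff} to obtain convergence of the directions $A^n x/\|A^n x\|$ to that of $v$; as all iterates $M^n x$ lie in the fixed slice $\mathcal{M}$ and the chart sending a direction in $\relint(K)$ to its intersection with $\mathcal{M}$ is continuous, the points $M^n x$ converge to $p$.

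The main obstacle I anticipate is not invoking Birkhoff itself but the careful passage between the two pictures: first pinning down the sign of $1 + \langle\psi,x\rangle$ so that $A$ is genuinely positive on $K$, and second converting convergence in direction inside the cone into honest convergence of points within $\mathcal{H}$. The latter requires the projective chart $\relint(K) \to \mathcal{M}$ to be continuous with the relevant directions staying uniformly transverse to $\mathcal{M}$, which holds because $\mathcal{H}$ is compact and bounded away from the origin.
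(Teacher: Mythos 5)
Your proof is correct and follows essentially the same route as the paper's: form the cone $K$ generated by $\mathcal{H}$, use Lemma \ref{relint_convex_cone} to see that $A$ is a positive mapping on $K$, invoke the Birkhoff extension to obtain the unique attracting eigendirection, and intersect that ray with $\mathcal{H}$ to get the fixed point. You are in fact more careful than the paper on two points it leaves implicit, namely the sign of $1+\langle\psi,x\rangle$ and the passage from Hilbert-metric contraction to genuine convergence of the iterates $M^n x$ within $\mathcal{H}$.
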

\begin{proof}
Let $\mathcal{K}$ be the cone generated by $\mathcal{H}$, 
\[
\mathcal{K} = \bigcup_{\lambda \geq 0} \lambda \mathcal{H}.
\]
Extend the maps $A$ and $\mathbf{M}$ to all points in the cone $\mathcal{K}$ as follow: For a point $y \in \mathcal{K}\setminus\{0\}$, there is a unique $\lambda >0$ and a point $x \in \mathcal{H}$, such that $y = \lambda x$ (we have uniqueness because $\mathcal{H}$ lies in an affine subspace that does not contain the origin). Then define $A y = \lambda(Ax)$, and $My = \frac{\lambda(Ax)}{1+ \lambda \langle\psi,x\rangle}$. $A 0 = M 0 = 0$. Then easily see that by Lemma \ref{relint_convex_cone}, $A$ is a strictly positive linear mapping with respect to the cone $\mathcal{K}$. Thus, there must be a unique eigenvector of $A$ inside $\mathcal{K}$ with strictly dominant eigenvalue. We call this pair $(\lambda,v)$. Let $(v)$ be the halfline containing $v$ in $\mathcal{K}$, and let $w$ be the intersection of $(v)$ and $\mathcal{H}$. 
Now, $(v)$ is an invariant set of $M$, but so is $\mathcal{H}$. Hence, $w$ must be the unique fixed point of $M$ inside $\mathcal{H}$.
\end{proof}

\section{Overview of the techniques and results}

\subsection{Overview of the embedding technique}
Before trying to analyse the general Glass network, we will quickly review the key steps in analysing the original, binary Glass network.
\begin{enumerate}
    \item Firstly, we scale the decay rate to 1, which is just a linear transformation of the time parameter.
    \item Secondly, we make an affine transformation, so that the threshold of each variable is now $0$.
    \item Finally, we deduce a discrete map for transition between two walls in the form of a linear fractional map.
\end{enumerate}

\begin{figure}[!ht]
\centering
\includegraphics[width=0.9\textwidth]{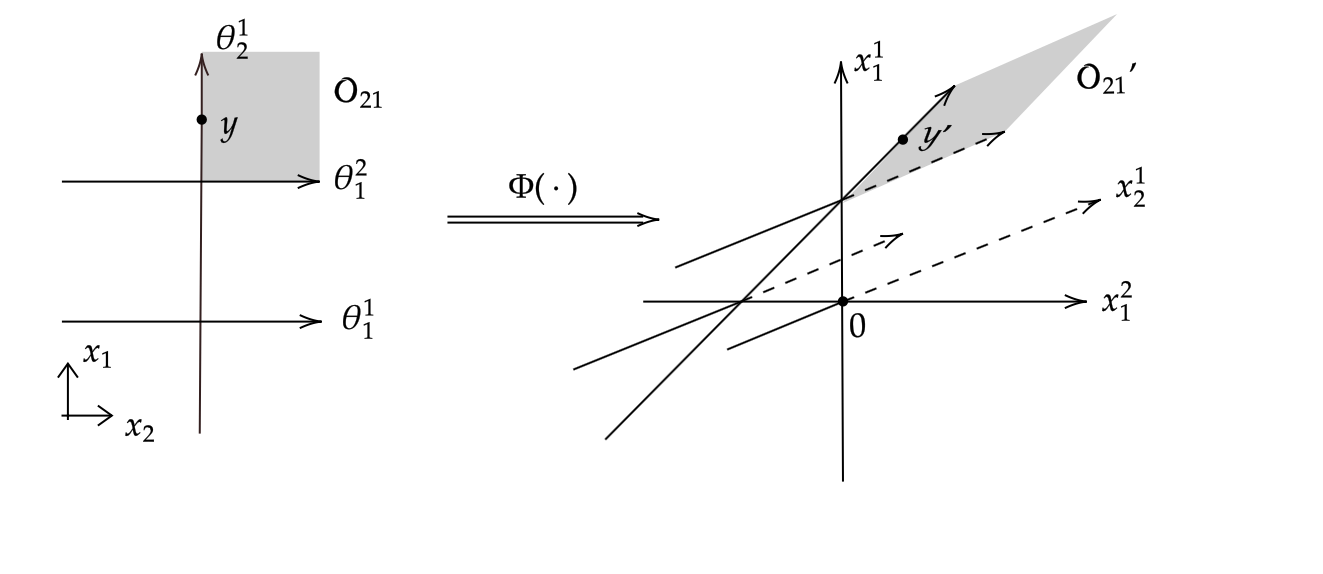}
\caption{\label{fig:diagram-2}A demonstration of our embedding function $\Phi$ that maps $y$ to $y'$, and the region $\mathcal{O}_{21}$ to $\mathcal{O}_{21}'$ (we are viewing $\mathcal{O}_{21}'$ upward). Notice that the region $\mathcal{O}_{21}'$ is fully contained in an orthant of the system composes of $(x_1^1,x_1^2,x_2^1).$}
\end{figure}

For a general network, the first step can be applied straight away, but the other steps cannot be copied trivially. For step 2, since a variable now can have two or more thresholds, there is no symmetrical choice of which threshold to scale to zero (while the others must be nonzero).

For step 3, the obvious thing to try would be to consider a switching between walls as locally a binary switching, then we can establish a discrete map between two immediate walls. Thus, a discrete map could theoretically be obtained by combining all the maps in between. However, there are a few theoretical and practical drawback to this approach:
\begin{enumerate}
    \item In a sequence of regions, we have to constantly perform affine transformations that map vertices to the origin, and then compose it with other maps. The result might be obtainable, but it is unclear whether that will be useful or not.
    \item In case there are more than one choice of vertices, there is no symmetrical way to choose one over the others.
    \item There is one particular switching that is novel to the general network: parallel switching, when trajectories starting at one wall enter a different wall that is parallel to the original one.
\end{enumerate}

The method about to be proposed here elegantly resolves all these problems, and in some rare instances, for theoretical purpose, it usage is even more advantageous compare to the original procedure for the binary networks. To be more specific, let us start with step 2.

Suppose we are given a variable $x_1$ with one threshold $\theta_1$. The standard procedure would be to take an affine transformation $x_1'=x_1-\theta_1$, and then the switching point $\theta_1'=\theta_1-\theta_1=0$. Now, if $x_1$ has two thresholds $\theta_1^1$ and $\theta_1^2$, instead of having to choose which $\theta_1^i$ to be transformed to zero, we can map both to zero, by creating two new variables, $x_1^1=x_1-\theta_1^1$, and $x_1^2=x_1-\theta_1^2$. Continue with this idea, we can define a new equivalent system. One can see that the new system (called the \textbf{embedded system}) should be equivalent to the old one, since coordinate-wise, this is just a normal affine transformation. Further, it is not hard to see that the thresholds of the variables $x_i^j$ are all zero now; thus, we find ourselves in a similar setting to a higher dimension binary Glass network.

However, the downside is we have to confine ourselves to an affine subspace of the higher dimension binary Glass network, since points outside of the subspace do not behave predictably, and are not very meaningful considering the original setting. Thus, results regarding the binary network, as well as techniques to prove them, have to be adapted to this affine subspace with care.

The best thing about this method is that it gives us the discrete map between walls as a linear fractional map, as derived in \cite{Edwards} and \cite{Glass_1978}. Thus, it provides a very nice form corresponds to step 3 listed above. Thus, similarly to the two papers above, periodic orbit analysis is reduced to determining all the fixed points of the return map, which corresponds to determining all the eigenvectors.

Finally, this method is in perfect harmony with the existing procedure for the binary Glass network: when restricted to the binary network, it is exactly the usual steps taken.

\subsection{Overview of the results regarding periodic orbits in a cyclic attractor}

The main results can be summarised as follow: given a cyclic attractor, either the periodic orbits are all degenerate, or there is a unique and asymptotically stable periodic orbit. We first refine the result for a non full-rank cyclic attractor in the binary network, then come to the two cases pertain to the general network: the ideal cyclic attractor, and non ideal cyclic attractor.

For a (non full-rank) cyclic attractor in the binary network, as an example, take the cyclic attractor $\mathcal{C}: \mathcal{O}_{000} \rightarrow \mathcal{O}_{100} \rightarrow \mathcal{O}_{110} \rightarrow \mathcal{O}_{010} \rightarrow \mathcal{O}_{000}$ as in figure \ref{fig:diagram-4}. 
Notice that the third coordinate is redundant, we can remove it to a full rank cyclic attractor $\mathcal{C}': \mathcal{O}_{00} \rightarrow \mathcal{O}_{10} \rightarrow \mathcal{O}_{11} \rightarrow \mathcal{O}_{01} \rightarrow \mathcal{O}_{00}$. Thus, we can apply Theorem \ref{theorem_2.2} to $\mathcal{C}'$. If $\mathcal{C}'$ is degenerate, then so must also be $\mathcal{C}$. Otherwise, a fixed point of the return map of $\mathcal{C}'$ corresponds to an invariant set of the return map of $\mathcal{C}$, and we can apply Lemma \ref{lemma_2.9} to find a unique periodic orbit inside $\mathcal{C}$. A rigorous treatment of this case can be found in subsection \ref{ssection_5.1}.

For the non ideal case, given a cyclic attractor $\mathcal{C}$, it behaves similar to a cyclic attractor $\mathcal{C'}$
in the binary case when restricted to a suitable domain. Our main difficulty is to show that the unique periodic orbit of the binary network, if it exists, has to lie in the smaller cyclic attractor of the general Glass network as well. But this must be true because the regions enclosed by $\mathcal{C}$ is invariant.

For the ideal case, apply Lemma \ref{lemma_2.9} to a suitable subset of the Poincare section, we can always find a unique and stable periodic orbit inside an ideal cyclic attractor.

\section{Rigorous construction of the embedding trick}
Let $m= \sum j_i$, we define an embedding function $\Phi$ from $\mathbb{R}^n$ to $\mathbb{R}^m$ as followed:
\[
\Phi (x) = \begin{bmatrix}    
    1 & 0 & \cdots & 0 \\
    1 & 0 & \cdots & 0 \\
    \vdots & \vdots & \ddots & \vdots \\
    1 & 0 & \cdots & 0 \\
    0 & 1 & \cdots & 0 \\
    \vdots & \vdots & \ddots & \vdots \\
    0 & 0 & \cdots & 1
    \end{bmatrix} x - 
    \begin{pmatrix}    
    \theta_{1}^1 \\ \theta_{1}^2 \\ \vdots \\ \theta_{1}^{J_1} \\ \theta_{2}^1 \\ \vdots \\ \theta_{n}^{J_n}
    \end{pmatrix}
\]
In the above formula, denote the matrix as ${B}$ and the vector as ${v}$, we can shorten the formula as
\[
\Phi (x) = \mathbf{B} x - \mathbf{v}.
\]
Let $x' = \Phi (x)$, then $(x')_i^j=x_i - \theta_{i}^j$, hence ${x'}_i^j$ keeps track of the relationship of $x_i$ and $\theta_i^j$. Call the phase space of the original system (of $n$ variable $x_1$, $...$, $x_n$) $\mathcal{M}$. Let $\mathcal{M}' = \Phi (\mathcal{M})$, clearly, from observing the form of $\Phi$, we have that $\mathcal{M}'$ is an affine subspace of dimension $n$ in $\mathbb{R}^m$, and that $\Phi$ is an affine mapping from $\mathbb{R}^n$ to $\mathbb{R}^m$, and a bijection from $\mathcal{M}$ to $\mathcal{M}'$. We endow $\mathbb{R}^m$ with a binary Glass network centred at the origin. Given an orthant $\mathcal{O}' \subset \mathbb{R}^m$, we define its focal point as follow:
\begin{enumerate}
    \item If $\mathcal{O}' \cap \mathcal{M}' = \emptyset$, pick its focal point $f'$ to be any point inside itself. (Actually, the dynamics of trajectories inside this orthant is irrelevant to us).
    \item If $\mathcal{O}' \cap \mathcal{M}' \neq \emptyset$, let $x'$, $y'$ be two arbitrary points in $\mathcal{O}' \cap \mathcal{M}'$. Define two points $x$, $y \in \mathbb{R}^n$ as follow $x:= \Phi^{-1} (x')$, and $y:= \Phi^{-1} (y')$. Checking coordinate-wise, it is obvious that these two points must be in the same region in $\mathbb{R}^n$. Further, if $z$ is a point in this region, then $\Phi(z)$ must be in $\mathcal{O}'$. Thus, we can find a unique region $\mathcal{O}$ in $\mathbb{R}^n$ such that $\Phi(\mathcal{O}) \subset \mathcal{O}'$. Let $f$ be the focal point of $\mathcal{O}$, define $f':=\Phi(f)$ to be the focal point of $\mathcal{O}'$.
\end{enumerate}

\subsection{Proof that the embedding trick is well-defined}

Our goal is to show that $\mathcal{M}'$ behaves ``similarly'' to $\mathcal{M}$. To do that, we need to show a few things
\begin{enumerate}
    \item $\Phi$ maps a region in $\mathbb{R}^n$ into a single orthant in $\mathbb{R}^m$, and a wall in $\mathbb{R}^n$ into a single wall in $\mathbb{R}^m$. This is required so that we can compare the discrete maps between two systems.
    \item $\Phi(x(t)) = \Phi(x_0)(t)$, where $x(t)$ is the trajectory in $\mathbb{R}^n$ starting at $x_0$, and $\Phi(x_0)(t)$ is the trajectory in $\mathbb{R}^m$ starting at $\Phi(x_0)$.
\end{enumerate}

(1) can be shown by checking all the coordinates. We will give here a proof for (2).

\begin{prop}
    $\Phi(x(t)) = \Phi(x_0)(t)$.
\end{prop}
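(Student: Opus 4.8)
The plan is to exploit the affine structure of $\Phi$ together with the fact (granted as item (1) of the preceding outline) that $\Phi$ carries each region of $\mathbb{R}^n$ into a single orthant of $\mathbb{R}^m$ and each wall into a single wall. Since $x(t)$ is piecewise of the exponential form dictated by the current region's focal point, and switches regions only when it meets a wall, the strategy is to verify the identity on each linear piece and then match the pieces at the switching times by continuity.

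First I would treat a single region. Suppose that on some time interval the trajectory stays in a region $\mathcal{O}$ with focal point $f$, so that $x(t) = f + (x(0)-f)e^{-t}$. Because $\Phi(x) = \mathbf{B}x - \mathbf{v}$ is affine and $\mathbf{B}$ is linear, I would compute
\[
\Phi(x(t)) = \mathbf{B}x(t) - \mathbf{v} = (\mathbf{B}f - \mathbf{v}) + \mathbf{B}(x(0)-f)e^{-t} = \Phi(f) + \big(\Phi(x(0)) - \Phi(f)\big)e^{-t},
\]
using $\mathbf{B}(x(0)-f) = \mathbf{B}x(0) - \mathbf{B}f = \Phi(x(0)) - \Phi(f)$. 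By construction the orthant $\mathcal{O}'$ containing $\Phi(\mathcal{O})$ has focal point $f' = \Phi(f)$, so the right-hand side is exactly the embedded trajectory $f' + (\Phi(x(0)) - f')e^{-t}$. Hence $\Phi$ conjugates the two flows on any single linear piece. As a by-product, rewriting the expression as $e^{-t}\Phi(x(0)) + (1-e^{-t})\Phi(f)$ exhibits it as an affine combination of two points of $\mathcal{M}'$, which confirms in passing that $\mathcal{M}'$ is invariant under the embedded flow.

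Next I would match the pieces at the switching times. The trajectory leaves $\mathcal{O}$ at the first time $\tau$ at which it meets a wall, say where $x_m(\tau) = \theta_m^j$. Then the corresponding embedded coordinate satisfies $(\Phi(x(\tau)))_m^j = x_m(\tau) - \theta_m^j = 0$, so $\Phi(x(\tau))$ lands precisely on the coordinate hyperplane that is the image wall furnished by item (1); conversely every coordinate hyperplane $\{(x')_m^j = 0\}$ of $\mathbb{R}^m$ pulls back under $\Phi^{-1}$ to the genuine threshold $x_m = \theta_m^j$, and the remaining coordinates $(x')_m^{j'} = x_m - \theta_m^{j'}$ for $j' \neq j$ stay nonzero since the thresholds are distinct, so exactly one hyperplane is crossed and no spurious switching occurs. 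Thus both systems switch at the same instant $\tau$ and pass to corresponding region/orthant pairs whose focal points again agree under $\Phi$.

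Finally I would assemble the global statement by induction on the (finite, on any bounded time interval) sequence of regions visited by $x(t)$: on the first piece the identity holds by the single-region computation; at the switching time $\tau$ continuity of $\Phi$ and of both trajectories forces the values to agree, and the inductive hypothesis restarts the argument from the new region with matching focal point $\Phi(f_{\mathrm{next}})$. The main obstacle will be the bookkeeping around the switches rather than any analytic estimate: I must be certain that the binary network in $\mathbb{R}^m$ switches on precisely those hyperplanes that pull back to real thresholds and enters precisely the orthant that is the $\Phi$-image of the next region, which is exactly what item (1) and the orthant-by-orthant definition of the embedded focal points guarantee.
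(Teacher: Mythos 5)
Your proposal is correct and follows essentially the same route as the paper: verify the identity on each linear piece using the explicit exponential solution and the fact that the embedded focal point is $\Phi(f)$, then induct over the switching times. The only difference is presentational — you work with the affine map $\mathbf{B}x-\mathbf{v}$ in vector form where the paper checks coordinate-wise — and your explicit remark that the coordinates $(x')_m^{j'}$ for $j'\neq j$ stay nonzero at a switch (so no spurious wall is crossed) makes the matching step slightly more careful than the paper's.
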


\begin{proof}
    For any pair of $(i,j)$ satisfying $1\leq i \leq n$, $1\leq j \leq J_i$, we have
    \[    \Phi ({y}(t))_{i}^j = {y}_i (t) - \theta_{i}^j.\]
    And
    \[  {y'}_{i}^j(0)=\Phi ({y}(0))_{i}^j = {y}_i (0) - \theta_{i}^j.\]
    \[  {f'}_{i}^j=\Phi ({f})_{i}^j = {f}_i- \theta_{i}^j.\]
We have $ \dot {y'}_{i}^j (t) = {f'}_{i}^j - {y'}_{i}^j (t)$ for all $t \geq 0$.
Hence,
\begin{align}
{y'}_{i}^j (t) &= {f'}_{i}^j + ({y'}_{i}^j (0)- {f'}_{i}^j) e^{-t}\\
&= (f_{i}^j -\theta_{i}^j)  + (y_{i}^j (0)- f_{i}^j) e^{-t} \\
&= [f_{i}^j+ (y_{i}^j (0)- f_{i}^j) e^{-t}]-\theta_{i}^j \\
&= y_{i}^j(t) -\theta_{i}^j \\
&= \Phi( y_{i}^j(t)) .
\end{align}
Since this is true for any pair of $(i,j)$ satisfying the condition, we have that ${y'}(t) = \Phi ({y}(t))$, as long as ${y}(t)$ and ${y'}(t)$ stay in their respective region. But our calculation shows that they must be crossing boundaries and sub-boundaries at the same time, hence, we can extend the results to the next corresponding region and sub-region. Thus, we deduce that the formula holds for all $t \geq 0$.
\end{proof}

Observing that all the variables in the new system now have $0$ as their thresholds. This allows us to express the discrete maps between walls in $\mathbb{R}^m$, restricted to the space $\mathcal{M}'$ as linear fractional mapping in the form deduce in \cite{Edwards}
\[
{y_k} = \mathbf{M } y_0=\frac{A y_0}{1+\langle \psi,y \rangle}
\]
for the $k$th boundary crossing after ${y_0}$, where $A$ is an $m\times m$ matrix, and $\psi$ is an $m$ dimensional column vector. Further, if $x'$ is a point on the domain, and $t'$ is the time taken for the trajectory starting at $x'$ to reach the codomain, then 
\[
e^{t'} = 1+\langle \psi,x \rangle.
\]

\subsection{Finding fixed points in the Poincare return map}
\label{finding_fp}
A fixed point in the Poincare return map (assume that the point lies in the returning region) corresponds to a periodic orbit in the affine subspace $\mathcal{M}'$, which in turn corresponds to a periodic orbit in the original system $\mathcal{M}$.

We can proceed to analyse the stability of these fixed points, if they exists, and the periods of periodic orbits (as time is identical in both systems) in an almost identical fashion to subsections 4.3 and 4.4 in \cite{Edwards}.

As a remark, one could always create ``fake'' thresholds on any variable, as long as the focal points are invariant to this threshold, then the two systems behave identically. But a nice property of this trick is that if applies to the original Glass network, we can embed it in an affine subspace in a higher dimension binary Glass network, such that the affine subspace does not contain the origin (the origin cannot be contained in any embedded system that has at least two distinct thresholds on a single variable, since otherwise, we would deduce that the two thresholds are in fact equal, which is a contradiction). \textbf{Hence, without loss of generality, we always assume that the affine subspace which contains our embedded phase space does not contain the origin.} The implication of this assumption is that no two distinct points on the embedded phase space lies on the same line through the origin. This allows us to apply Lemma \ref{lemma_2.9} to a wide range of situations.

\section{Cyclic attractors}

\subsection{Cyclic attractors in the binary network}
\label{ssection_5.1}
Given a cyclic attractor $\mathcal{C}$ with the usual setting in a binary Glass network centered at $0$. Suppose further that it is non full-rank, meaning, there is at least one variable that does not switch. Without loss of generality, suppose that the $m$ variables that switch at least once in $C$ are $x_1$, $x_2$, $\dots$, $x_m$, and the variables $x_{m+1}$, $\dots$, $x_n$ are all non-negative in $C$.

First off, we notice that the variables $x_{m+1}$, $\dots$, $x_n$ are ``inconsequential''. To formalise what we mean by this idea, for points inside $\mathcal{C}$, we define an equivalent relation as follow: $y \equiv z$ if $y_i=z_i$ for $1\leq i \leq m$.

\begin{figure}[!ht]
\centering
\includegraphics[width=0.9\textwidth]{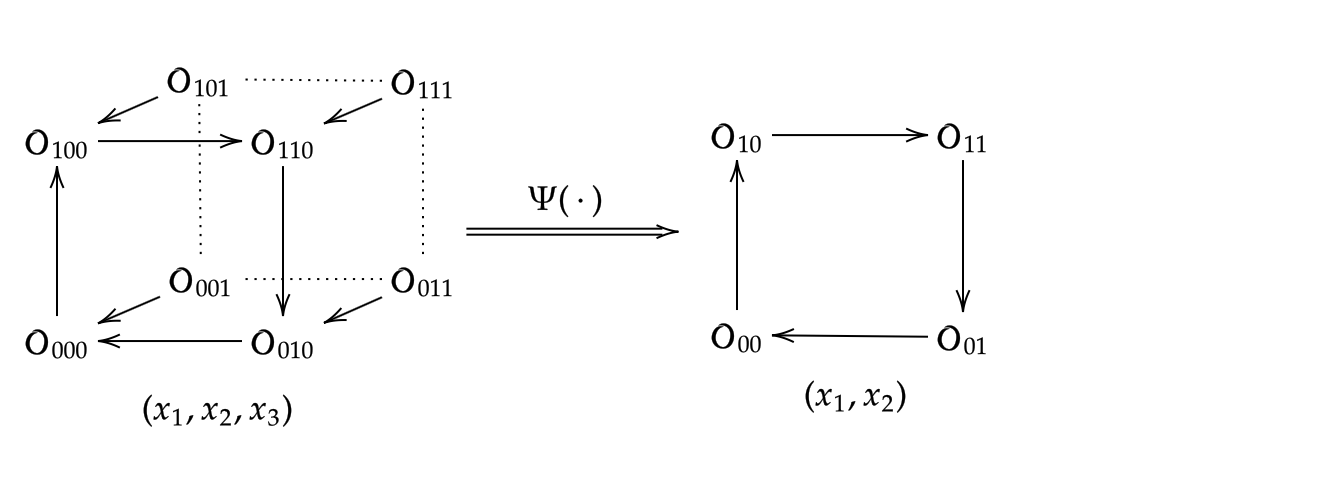}
\caption{\label{fig:diagram-4} The cyclic attractor $\mathcal{O}_{000} \rightarrow \mathcal{O}_{100} \rightarrow \mathcal{O}_{110} \rightarrow \mathcal{O}_{010} \rightarrow \mathcal{O}_{000}$ is of rank $2$, while our system has dimension $3$. However, if we ``compress'' it using our function $\Psi$ that essentially removes the third coordinate, then the resulting cyclic attractor, $\mathcal{O}_{00} \rightarrow \mathcal{O}_{10} \rightarrow \mathcal{O}_{11} \rightarrow \mathcal{O}_{01} \rightarrow \mathcal{O}_{00}$ is a full rank one.}
\end{figure}

\begin{prop}
\label{prop_5.1}
    If $y \equiv z$, then for all $t \geq 0$, $y (t) \equiv z(t)$ ($t$ ranges from $0$ to $\infty$, except for the case of finite time convergence then $t$ ranges from $0$ to the finite time convergence.
\end{prop}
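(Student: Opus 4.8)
The plan is to reduce everything to the explicit form of the flow inside a single region and then to propagate the agreement of the first $m$ coordinates across region transitions by induction along the cycle. First I would observe that two $\equiv$-equivalent points of $\mathcal{C}$ necessarily lie in the same region. Indeed, within $\mathcal{C}$ the coordinates $x_{m+1},\dots,x_n$ are all positive, so every region of $\mathcal{C}$ has level $1$ in those coordinates; consequently a region of $\mathcal{C}$ is determined entirely by the levels of the switching coordinates $x_1,\dots,x_m$. Since $y \equiv z$ forces $y_i = z_i$ for $i \le m$, the points $y$ and $z$ have identical levels in every coordinate, hence sit in the same region $\mathcal{O}_1$ and share a single focal point $F$.

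Next I would use the explicit trajectory formula $y(t) = F + (y(0)-F)e^{-t}$, valid while the trajectory stays in $\mathcal{O}_1$. Reading this coordinate-wise, for each $i \le m$ we have $y_i(t) = F_i + (y_i(0)-F_i)e^{-t}$, and likewise for $z$; since $y_i(0) = z_i(0)$ and the focal point $F$ is shared, $y_i(t) = z_i(t)$ for all $i \le m$ as long as both remain in $\mathcal{O}_1$. Thus the first $m$ coordinates evolve identically inside a region.

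The core of the argument is then the induction step: I must show that $y$ and $z$ leave $\mathcal{O}_1$ at the same instant and into the same successor region $\mathcal{O}_2$. By Proposition~\ref{prop_2.4}, each region of the cyclic attractor has a unique switching variable, and by hypothesis this variable lies among $x_1,\dots,x_m$; the inconsequential coordinates $x_{m+1},\dots,x_n$ stay positive throughout $\mathcal{C}$ and never reach a threshold. Hence the exit time is the first time the shared switching coordinate among the first $m$ reaches its threshold, which is identical for $y$ and $z$, and the successor region is again determined by the first $m$ levels. The flow is continuous at the transition, so the first $m$ coordinates still agree at the entry to $\mathcal{O}_2$; applying the single-region argument again and iterating along $\mathcal{O}_1 \to \mathcal{O}_2 \to \cdots$ yields $y_i(t) = z_i(t)$ for $i \le m$ and all $t \ge 0$ (or up to the finite-time convergence instant, which is itself determined by the first $m$ coordinates), i.e. $y(t) \equiv z(t)$.

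I expect the main obstacle to be making rigorous that the inconsequential variables can never precipitate a region change that would desynchronise the two trajectories: one must invoke both Proposition~\ref{prop_2.4} (uniqueness of the switching variable in each region) and the standing assumption that $x_{m+1},\dots,x_n$ remain non-negative on $\mathcal{C}$, so that the sequence of regions traversed, together with the times of all crossings, depends only on the common first $m$ coordinates. Once that is secured, the induction is routine.
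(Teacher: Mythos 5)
Your proposal is correct and follows essentially the same route as the paper's proof: establish that $y$ and $z$ lie in the same region, show the first $m$ coordinates evolve identically there (you via the explicit exponential formula, the paper via uniqueness of ODE solutions — an immaterial difference), note that the unique switching variable lies among $x_1,\dots,x_m$ so both trajectories cross the same wall at the same instant, and induct along the cycle. No gaps; your added care about why the successor region and exit time depend only on the first $m$ coordinates is just a more explicit rendering of what the paper leaves implicit.
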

\begin{proof}
    From the setting, by checking each coordinate, $y$ and $z$ have to be in the same orthant.
    Without loss of generality, assume that $y$ and $z$ are in $\mathcal{O}_1$, and that $x_h$ is the variable that switches between $\mathcal{O}_1$ and $\mathcal{O}_2$, $1\leq kh\leq m$. 
    Then, as long as the trajectories starting at $y$ and $z$ remains in $\mathcal{O}_1$, for any $i$ between $1$ and $m$, the ODE equations of $y_i$ and $z_i$ are identical, namely:
    \[
    \Dot{y_i} = f^1_i - y_i,
    \]
    and,
    \[
    \Dot{z_i} = f^1_i - z_i.
    \]
    Further, $y_i(0) = z_i(0)$ (by assumption of equivalent), hence, by uniqueness of solution to ODE, we must have $y_i(t) = z_i(t)$, as long as the trajectories still remain in $\mathcal{O}_1$.
    In particular, this apply to $y_h$ and $z_h$. We have that the $h$th coordinate of the two trajectories switches to $0$ at the same time, thus, implicitly using continuous extension of $\mathcal{O}_2$ onto the wall $\mathcal{W}_1$, the two trajectories are still equivalent upon entering into $\mathcal{O}_2$.

    Thus, by induction, we obtain the conclusion: $y(t) \equiv z(t)$ as long as the trajectories are well defined.
\end{proof}

Further extending this idea, we notice that the ${m+1}$th, $\dots$, $n$th coordinates of the focal points are all ``inconsequential'' as well. This pave the way for us to define a ``compressing'' function and a new ``compressed'' system, whereby we keep only the first $m$ coordinates, and obtain a new binary Glass network in lower dimension. The equivalent cyclic attractor $\mathcal{C}'$ in the new system will have full-rank; thus, we can apply the theorem of Glass, Theorem \ref{theorem_2.2}.

We define a ``compressing'' function $\Psi: \mathbb{R}^n \rightarrow \mathbb{R}^m$ as follows:
    \[
    \Psi (x) = \begin{bmatrix}    
    1_{(1)} & 0 & \cdots & 0_m & \cdots & 0 \\
    0 & 1_{(2)} & \cdots & 0_m & \cdots & 0 \\
    \vdots & \vdots & \ddots & \vdots & \cdots & \vdots \\
    0 & 0 & \cdots & 1_{(m)} & \cdots & 0
    \end{bmatrix} x 
    = (x_1,x_2,\dots,x_m)^T.
    \]
Easily checked that for an orthant $\mathcal{O}_j$ in $\mathcal{C}$, $\mathcal{O}_j' := \Psi (\mathcal{O}_j)$ is also an orthant in $\mathbb{R}^m$. Further, $ \mathcal{O}_i' \neq \mathcal{O}_j'$ for $i\neq j$ (since the two orthants $\mathcal{O}_i$ and $\mathcal{O}_j$ differs in the first $m$ coordinates). Set the focal point ${f^j} '$ of $\mathcal{O}_j '$ to be $\Psi (f^j)$, and by checking the position of the focal points, one can see that the cycle of orthants $\mathcal{O}_1' \rightarrow \mathcal{O}_2' \rightarrow \dots \rightarrow \mathcal{O}_k' \rightarrow \mathcal{O}_1' $ is a cyclic attractor in $\mathbb{R}^m$. Call this cycle $\mathcal{C}'$. Further, $\mathcal{C}'$ is a full-rank cyclic attractor in $\mathbb{R}^m$. But before examining more this new cyclic attractor, we need to establish a few link between the two systems.

\begin{prop}
    If $x\in \mathcal{O}_i$ and $x' \in \mathcal{O}_i '$, $x' = \Psi(x)$. Denoted $x(t)$ the trajectory starting at $x$ in $\mathbb{R}^n$, and $x'(t)$ the trajectory starting at $x'$ in $\mathbb{R}^m$, $x'(t) = \Psi(x(t))$, for any time $t$ in the maximal solution of both $x$ and $x'$.
\end{prop}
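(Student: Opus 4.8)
The plan is to mirror the proof of Proposition \ref{prop_5.1}, combining a coordinate-wise comparison of the governing ODEs with an induction over the sequence of regions traversed by the trajectory. The essential reason the statement holds is that $\Psi$ only discards the coordinates $x_{m+1}, \dots, x_n$, which by the construction of $\mathcal{C}$ never switch and never influence the region transitions, so the flow of the first $m$ coordinates closes up into an autonomous subsystem that is reproduced exactly by the compressed network $\mathcal{C}'$.

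First I would treat the base case on the initial region. While the trajectory starting at $x$ remains in $\mathcal{O}_i$ and the trajectory starting at $x'$ remains in $\mathcal{O}_i'$, the equation governing each coordinate $x'_l(t)$ with $1 \le l \le m$ is $\dot{x}'_l = (f^i)'_l - x'_l$, and since the focal point of $\mathcal{O}_i'$ was defined as $\Psi(f^i)$ we have $(f^i)'_l = f^i_l$. This is precisely the equation satisfied by $x_l(t)$ in the original system. The initial data also agree, $x'_l(0) = x_l(0)$, because $x' = \Psi(x)$. By uniqueness of solutions to linear ODEs it follows that $x'_l(t) = x_l(t)$ for $1 \le l \le m$, that is $x'(t) = \Psi(x(t))$, as long as both trajectories stay in their respective regions.

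Next I would argue that the two trajectories cross their walls simultaneously. The variable that switches between $\mathcal{O}_i$ and $\mathcal{O}_{i+1}$ is some $x_h$ with $1 \le h \le m$, since all switching variables lie among the first $m$ coordinates by the standing assumption on $\mathcal{C}$, and the same index governs the transition $\mathcal{O}_i' \to \mathcal{O}_{i+1}'$. As $x'_h(t) = x_h(t)$ from the base case, the threshold $0$ is reached at the same instant in both systems; using the continuous extension of the dynamics onto the wall $\mathcal{W}_i$, both trajectories enter the next region at that instant with their first $m$ coordinates still equal. One then repeats the base-case argument on $\mathcal{O}_{i+1}$ and $\mathcal{O}_{i+1}'$, and by induction over the regions of $\mathcal{C}$ the identity $x'(t) = \Psi(x(t))$ propagates for all $t$ in the common interval of existence; because every switching event is driven entirely by the first $m$ coordinates, the two maximal intervals in fact coincide.

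The main obstacle, exactly as in Proposition \ref{prop_5.1}, is the bookkeeping at the boundary crossings rather than anything analytically deep: one must check that $\Psi(\mathcal{O}_{i+1}) = \mathcal{O}_{i+1}'$ really is the next region of $\mathcal{C}'$, and that gluing the piecewise solutions across the walls via continuous extension is legitimate. The rest is routine, since within each region the dynamics is linear and uniqueness of solutions does all the work.
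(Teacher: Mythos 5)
Your proposal is correct and follows exactly the route the paper intends: the paper's own proof is a one-line remark that the claim is checked component-wise in the same fashion as Proposition \ref{prop_5.1}, and your argument is precisely that component-wise comparison (identical ODEs and initial data for the first $m$ coordinates, uniqueness of solutions, simultaneous wall crossings since the switching variable always lies among the first $m$ coordinates, then induction over the regions). You have simply written out in full the details the paper leaves implicit.
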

\begin{proof}
    We prove this by checking component-wise, in similar fashion to the proof of Proposition \ref{prop_5.1}.
\end{proof}

Let $\mathcal{W}_0$ the Poincare section, and $M$ the Poincare return map of the cycle of orthants $\mathcal{C}$ in $\mathbb{R}^n$, and let $\mathcal{W}_0 '$ the Poincare section, and $M'$ the Poincare return map of the cycle of orthants $\mathcal{C}'$ in $\mathbb{R}^m$, applying the above proposition, we have that $\Psi (Mx) = M'( \Psi (x))$.
If $x^*$ is a fixed point of $M$, then 
\begin{align}
    M'( \Psi (x^ * )) &= \Psi (M x^*) \\
    &= \Psi (x^*),
\end{align}
And thus, $\Psi (x^*)$ is also a fixed point of $M'$.
Thus, our analysis henceforth depends greatly on what happens on the cyclic attractor $\mathcal{C}'$. 

\textbf{Case 1: The cyclic attractor $\mathcal{C}'$ has a unique periodic orbit}, corresponds to a unique stable fixed point of $M'$ on $\mathcal{W}_0 '$. Called this fixed point $y^*$.  

Then easily check the set $E = \Psi ^{-1} (y^*) \cap \overline{\mathcal{ \mathcal{W}_0}}$ is an invariant set of $M$. 
Clearly, any fixed point of $M$ will have to lie in $E$. We will now show that $E$ has a unique stable fixed point. Let $u=\max \{ |f_i| \}$, $v=\min \{ |f_i| \}$, where $\{ f \}$ is the set of focal points of the orthants of $\mathcal{C}$, $m+1 \leq i \leq n$. we must have that $u\geq v > 0$. 

Now, consider $T = (y^*)' \times [v/2,u+1]^{n-m}$, which is compact and convex, and a proper subset of $E$. There can be no other fixed point of $M$ outside $T$, since $T$ is a globally attracting set.

Further, since the variables $x_{m+1},\dots,x_n$ do not switch, if there is a $y$ on the relative boundary of $T$, it must go into the relative interior of $T$ after one iteration (one can check this componentwise). Thus, we have that $M T \subset \relint T$. Further, the affine hull of $T$ does not containt the origin, thus, by Lemma \ref{lemma_2.9}, we have that $T$ has a unique stable fixed point, which corresponds to a unique stable periodic orbit of $\mathcal{C}$.

\textbf{Case 2: The cyclic attractor $\mathcal{C}'$ has a degenerate periodic orbit.} Then all the trajectories in $\mathcal{C}$ will have to converge to the spine of $\mathcal{C}$, since $\Psi^{-1} (0) = \mathcal{S}(\mathcal{C})$.

In summary,
\begin{theorem}
\label{theorem_5.3}
    Given a cyclic attractor $\mathcal{C}$ in $\mathbb{R}^n$, we have two possibilities:
    \begin{enumerate}
        \item The cyclic attractor has a unique and asymptotically stable periodic orbit.
        \item All trajectories in the cyclic attractor will converge to its spine (degenerate periodic orbits).
    \end{enumerate}
    The cases, as well as the periodic orbit, can be determined and calculated. 
\end{theorem}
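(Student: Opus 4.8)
The plan is to reduce the possibly non full-rank cyclic attractor $\mathcal{C}$ to a full-rank one in lower dimension, where Glass's dichotomy (Theorem \ref{theorem_2.2}) applies directly, and then lift the conclusion back up through the compressing map $\Psi$. First I would record that the non-switching coordinates $x_{m+1},\dots,x_n$ are dynamically inert along the cycle: by Proposition \ref{prop_5.1}, two initial points that agree in the first $m$ coordinates have trajectories that agree in those coordinates for all $t$, so the flow on $\mathcal{C}$ is entirely determined by the projection onto $(x_1,\dots,x_m)$. This legitimises passing to the compressed system $\mathcal{C}' = \Psi(\mathcal{C})$ in $\mathbb{R}^m$, which is a genuine cyclic attractor and, by construction, full-rank. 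The intertwining relation $\Psi(Mx) = M'(\Psi x)$ then transports fixed points of $M$ to fixed points of the compressed return map $M'$.

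With this reduction in place, I would split on the two outcomes of Theorem \ref{theorem_2.2} applied to $\mathcal{C}'$. If $\mathcal{C}'$ has a unique stable periodic orbit, corresponding to a unique stable fixed point $y^*$ of $M'$ on $\mathcal{W}_0'$, then every fixed point of $M$ must lie in the fibre $E = \Psi^{-1}(y^*) \cap \overline{\mathcal{W}_0}$, which is $M$-invariant. To isolate a unique fixed point inside $E$, I would exhibit the compact convex box $T = (y^*)' \times [v/2,\, u+1]^{n-m} \subset E$, where $u = \max\{|f_i|\}$ and $v = \min\{|f_i|\}$ over the non-switching coordinates of the focal points of $\mathcal{C}$. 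Since the return map $M$ in the binary case has the linear fractional form $Mx = Ax/(1+\langle\psi,x\rangle)$ and the affine hull of $T$ avoids the origin, Lemma \ref{lemma_2.9} yields a unique and asymptotically stable fixed point of $M$ in $T$ — hence a unique and stable periodic orbit of $\mathcal{C}$ — provided the containment $M T \subset \relint T$ is established. If instead $\mathcal{C}'$ admits only a degenerate periodic orbit, then trajectories of $\mathcal{C}'$ collapse to the origin in the compressed system; because $\Psi^{-1}(0) = \mathcal{S}(\mathcal{C})$, the corresponding trajectories in $\mathcal{C}$ converge to the spine, giving the degenerate alternative.

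The main obstacle I anticipate is verifying $M T \subset \relint T$ in the first case. The switching coordinates are already pinned at $y^*$ by the choice of fibre, so the substantive work is controlling the inert coordinates: one must check componentwise that after a single full circuit of the cycle each such coordinate is carried strictly into the open interval $(v/2,\, u+1)$, independently of where it began on the boundary of the box. This should follow from the contracting form $y(t) = F + (y(0)-F)e^{-t}$ of the flow toward focal points whose non-switching coordinates lie strictly between $v$ and $u$, but it requires careful bookkeeping across the successive segments of the cycle to confirm that strict interiority survives the composition, and that $T$ is globally attracting within $E$ so that no fixed point of $M$ can escape it. Securing this componentwise estimate is what makes Lemma \ref{lemma_2.9} applicable and thereby completes the dichotomy asserted in Theorem \ref{theorem_5.3}.
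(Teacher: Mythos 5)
Your proposal follows essentially the same route as the paper's own argument: compress away the non-switching coordinates via $\Psi$ to obtain a full-rank cyclic attractor $\mathcal{C}'$, apply Theorem \ref{theorem_2.2} there, and in the non-degenerate case restrict to the fibre $E=\Psi^{-1}(y^*)\cap\overline{\mathcal{W}_0}$ and the box $T=(y^*)'\times[v/2,u+1]^{n-m}$ so that Lemma \ref{lemma_2.9} delivers the unique stable fixed point, with the degenerate case handled via $\Psi^{-1}(0)=\mathcal{S}(\mathcal{C})$. The containment $MT\subset\relint T$ that you flag as the remaining obstacle is exactly the step the paper also settles by a componentwise check on the inert coordinates, so your outline matches the paper's proof in both structure and level of detail.
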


\begin{remark}
    The conclusion here is a slight refinement of the conclusion in section 4.5, \cite{Edwards}. In a non full-rank cyclic attractor in the binary network, the matrix of the return map is not necessarily strictly positive (which is why in \cite{Glass_1978}, the authors only consider full-rank cyclic attractor). Hence, in the case the dominant eigenvalue(s) have absolute value greater than $1$, applying the Perron-Frobenius theorem straight away would only result in the existence of a neutrally stable periodic orbit. Our Theorem \ref{theorem_5.3} is slightly stronger, since we know in that case, there must be a unique and asymptotically stable periodic orbit.
\end{remark}

\subsection{Ideal cyclic attractor in a general Glass network}
\label{ssection_5.2}
\begin{prop}
\label{prop_5.5}
    Let $\mathcal{C} = \bigcup \mathcal{O}_i$ be any cyclic attractor. If there is a point $x \in \overline{ \mathcal{W}_i}$ such that $\mathbf{M} x \in \partial \mathcal{W}_{i+1}$, then $x$ must be a point on the spine of $\mathcal{C}$.
\end{prop}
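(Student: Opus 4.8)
The plan is to follow the single flow segment through $\mathcal{O}_{i+1}$ and to exploit that, by Proposition \ref{prop_2.4}, this region has a unique switching variable. Write $x_m$ for the variable that switches across $\mathcal{W}_{i+1}$, with threshold value $\theta_m$, and let $y(t) = f^{i+1} + (x - f^{i+1})e^{-t}$ be the trajectory in $\mathcal{O}_{i+1}$ issuing from $x \in \overline{\mathcal{W}_i}$; then $\mathbf{M}x = y(t^*)$, where $t^* \ge 0$ is the first time the $m$-th coordinate attains $\theta_m$. The structural fact I would isolate first is that, because $m$ is the only switching index in $\mathcal{O}_{i+1}$, every index $l \ne m$ lies in $I^0(\mathcal{O}_{i+1})$; together with the standing assumption that no focal coordinate sits on a threshold, this places $f^{i+1}_l$ strictly in the interior of the interval of $\mathcal{O}_{i+1}$ in coordinate $l$.

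Next I would translate the hypothesis $\mathbf{M}x \in \partial\mathcal{W}_{i+1}$ into coordinates. Since $(\mathbf{M}x)_m = \theta_m$ automatically, landing on the relative boundary of $\mathcal{W}_{i+1}$ means that $y_l(t^*) = \theta_l$ for some $l \ne m$, where $\theta_l$ is an endpoint of the interval of $\mathcal{O}_{i+1}$ in coordinate $l$. The heart of the argument is then a monotonicity estimate in that coordinate: $t \mapsto y_l(t)$ runs monotonically from $x_l$ toward the strictly interior value $f^{i+1}_l$, so its range over $t \ge 0$ is the segment from $x_l$ (included) to $f^{i+1}_l$ (excluded). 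Because $x \in \overline{\mathcal{W}_i}$ confines $x_l$ to the closure of this same interval, the only way for that range to reach the endpoint $\theta_l$ is to have $x_l = \theta_l$, and then $y_l(t) = \theta_l$ can occur only at $t = 0$. Hence $t^* = 0$.

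The conclusion is then immediate: $t^* = 0$ gives $x_m = y_m(0) = \theta_m$, so $x \in \overline{\mathcal{W}_{i+1}}$ as well, and therefore $x \in \overline{\mathcal{W}_i} \cap \overline{\mathcal{W}_{i+1}}$, which is precisely (a piece of) the spine $\mathcal{S}(\mathcal{C})$.

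I expect the only real work to be bookkeeping rather than conceptual depth. One must run the monotonicity dichotomy carefully at both ends — increasing versus decreasing flow, upper versus lower threshold — and check in each case that membership of $x$ in $\overline{\mathcal{W}_i}$ furnishes exactly the one-sided bound that collapses the range onto $\theta_l$; the degenerate subcase $x_l = f^{i+1}_l$, where $y_l$ is constant and interior, is handled trivially. It is also worth recording the parallel-switching situation, in which the switching index of $\mathcal{W}_i$ coincides with $m$: there the same computation would force the threshold of $\mathcal{W}_i$ and that of $\mathcal{W}_{i+1}$ in coordinate $m$ to be equal, which is impossible, so $\overline{\mathcal{W}_i}$ has no point mapping into $\partial\mathcal{W}_{i+1}$ and the statement holds vacuously — confirming that the argument is uniform across all configurations.
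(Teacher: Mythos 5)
Your argument is correct and is essentially the paper's own proof, just written out in full: the paper likewise reduces to Proposition \ref{prop_2.4} and the coordinate-wise observation that in each non-switching coordinate the trajectory moves monotonically toward a focal coordinate lying strictly inside the region's interval, so it can only sit on a threshold at time $t=0$, forcing zero crossing time. The only cosmetic difference is that your conclusion literally places $x$ in $\overline{\mathcal{W}_i}\cap\overline{\mathcal{W}_{i+1}}$ rather than in the full spine $\bigcap_j\overline{\mathcal{W}_j}$, but that is precisely the form in which the subsequent corollary applies the proposition inductively around the cycle, and the paper's one-line proof establishes no more.
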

\begin{proof}
    This is an application of Proposition \ref{prop_2.4}. We prove this by considering coordinate wise: in the coordinates that do not switch, the value of the trajectories that start on $\partial \mathcal{W}_i \setminus \mathcal{S}(\mathcal{C})$ must be strictly inside the range of the same coordinate of $\mathcal{W}_{i+1}$.
\end{proof}

\begin{corollary}
    In an ideal cyclic attractor, $\mathbf{M} \overline{\mathcal{W}} \subset \relint (\mathcal{W})$.
\end{corollary}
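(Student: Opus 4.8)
The plan is to obtain the corollary almost immediately from Proposition \ref{prop_5.5} together with the defining property of idealness, namely $\mathcal{S}(\mathcal{C}) = \emptyset$. First I would fix the bookkeeping: read $\mathbf{M}$ as the Poincar\'e return map on a chosen section $\mathcal{W} = \mathcal{W}_0$, written as the composition $M_k \circ \cdots \circ M_1$ of the single-step transition maps $M_{i+1} \colon \overline{\mathcal{W}_i} \to \overline{\mathcal{W}_{i+1}}$, each continuously extended to the closed walls as discussed above. Since $\mathcal{C}$ is a cyclic attractor, every trajectory leaving $\mathcal{O}_{i+1}$ unambiguously crosses $\mathcal{W}_{i+1}$, so each $M_{i+1}$ is defined on all of $\mathcal{W}_i$; by continuity of the linear-fractional transition map, its extension carries $\overline{\mathcal{W}_i}$ into $\overline{\mathcal{W}_{i+1}}$.

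The core step is the single-step containment $M_{i+1}\,\overline{\mathcal{W}_i} \subset \relint(\mathcal{W}_{i+1})$. Here I would use the disjoint decomposition $\overline{\mathcal{W}_{i+1}} = \relint(\mathcal{W}_{i+1}) \sqcup \partial\mathcal{W}_{i+1}$ and argue by contradiction: if some $x \in \overline{\mathcal{W}_i}$ satisfied $M_{i+1}x \in \partial\mathcal{W}_{i+1}$, then Proposition \ref{prop_5.5} would force $x \in \mathcal{S}(\mathcal{C})$, contradicting $\mathcal{S}(\mathcal{C}) = \emptyset$. Since the image already lies in $\overline{\mathcal{W}_{i+1}}$ and avoids the relative boundary, it must lie in the relative interior. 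Composing these inclusions around the cycle then yields $\mathbf{M}\,\overline{\mathcal{W}} \subset \relint(\mathcal{W})$, with the same argument applying verbatim if $\mathbf{M}$ is instead read as any individual step map rather than the full return map.

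I do not expect a genuine obstacle, since the corollary is essentially the contrapositive of Proposition \ref{prop_5.5} specialised to the ideal case. The only point that warrants care is confirming, before invoking Proposition \ref{prop_5.5}, that each extended step map really does land inside the closure $\overline{\mathcal{W}_{i+1}}$ and not outside the wall entirely; this is exactly where the cyclic-attractor hypothesis (unambiguous entry into the next region) and the continuity of the transition map are needed, so that ruling out the boundary genuinely forces the image into $\relint(\mathcal{W}_{i+1})$ rather than merely off it.
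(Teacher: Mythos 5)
Your argument is correct and is essentially the paper's own proof: both derive the corollary as the contrapositive of Proposition \ref{prop_5.5} combined with $\mathcal{S}(\mathcal{C}) = \emptyset$, the only cosmetic difference being that you make the step-by-step composition of the single-wall transition maps explicit where the paper compresses this into the word ``inductively.'' Your added check that each extended step map lands in $\overline{\mathcal{W}_{i+1}}$ before ruling out the boundary is a reasonable bit of extra care, not a different method.
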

\begin{proof}
    If $x$ is a point in $\overline{\mathcal{W}}$ such that $\mathbf{M} x \in \partial{\mathcal{W}}$, then inductively, by Proposition \ref{prop_5.5}, $x \in \mathcal{S}(\mathcal{C})$. But this is a contradiction since in an ideal cyclic attractor, $\mathcal{S}(\mathcal{C}) = \emptyset$.
\end{proof}

\begin{figure}[!ht]
\centering
\includegraphics[width=0.9\textwidth]{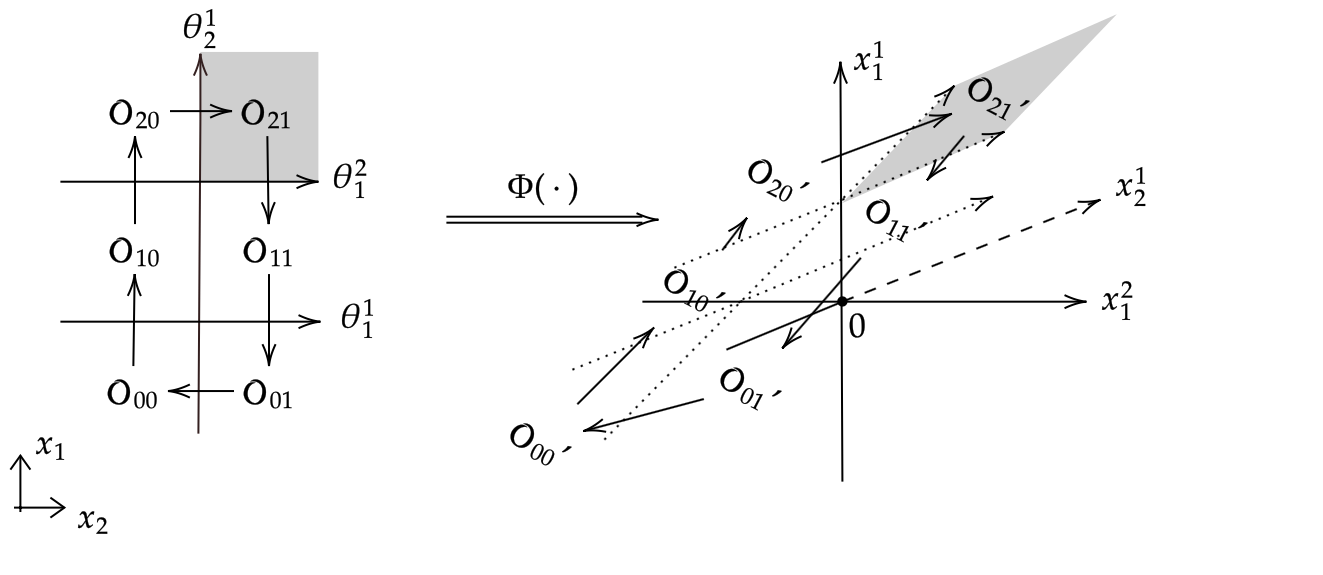}
\caption{\label{fig:diagram-6} The cyclic attractor $\mathcal{O}_{00} \rightarrow \mathcal{O}_{10} \rightarrow \mathcal{O}_{20} \rightarrow \mathcal{O}_{21} \rightarrow \mathcal{O}_{11} \rightarrow \mathcal{O}_{10}\rightarrow \mathcal{O}_{00}$ is an example of an ideal cyclic attractor.}
\end{figure}

\begin{figure}[!ht]
\centering
\includegraphics[width=0.9\textwidth]{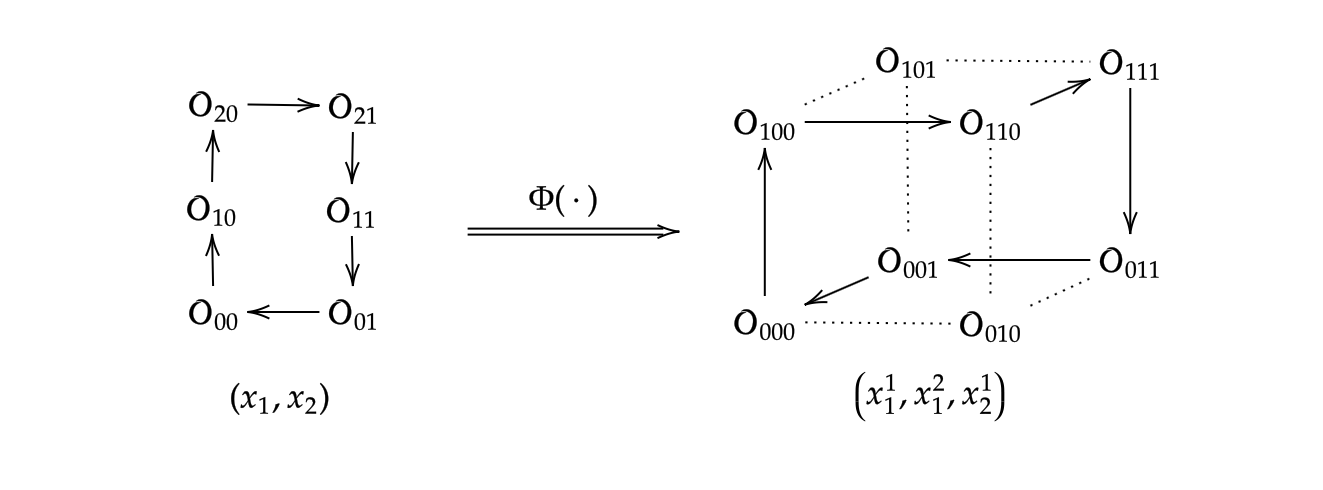}
\caption{\label{fig:diagram-3} The State Transition Diagraphs of the cyclic attractor in \ref{fig:diagram-6}, and of the embedded ``cyclic attractor'' on the affine subspace.}
\end{figure}

Then if the wall $\mathcal{W}$ is bounded, by Lemma \ref{lemma_2.9}, we must have a unique and stable fixed point inside $\mathcal{W}$. The case the wall $\mathcal{W}$ is unbounded requires a small and easy fix, which is relate to the property that the trajectories in a Glass network is asymptotically bounded. We will construct a box $\mathcal{H}$ in $\mathcal{W}$, such that every point in $\mathcal{W}$ will enter $\mathcal{H}$ if we repeatly apply the discrete map $\mathbf{M}$. The construction is as follow:

    Without loss of generality, assume $\mathcal{W}$ is a wall on variable $x_n$, for $x_n = \theta_{n}^j$. If the coordinate $x_1$ of $\mathcal{W}$ is bounded, pick $x_{1L}$ and $x_{1H}$ as the two end points of $x_1$. If $x_1$ is not bounded, without loss of generality, assuming in the positive direction, pick $x_{1L} = \theta_{1}^{J_1}$ as the low end point (the maximum switching point of $x_1$, pick $x_{1H} = max\{ f_1: \text{ $f$ is any focal point of $\mathcal{M}$}\} +1 $ as the high end point. (we want the upper bound to be strictly higher than the highest focal points in coordinate $x_1$).
    
    The hyperplane $ x_1 = x_{1H}$ divides $\mathcal{W}$ into 
    \[
    \mathcal{W}_1 = \{y \in \mathcal{W} : y_1 \leq x_{1H} \}
    \] 
    
    and $\mathcal{W}_2 = \mathcal{W} \setminus \mathcal{W}_1$.
    Given a point $\mathbf{y} \in \mathcal{W}$, if $y_1 \leq x_{1H}$, $(\mathbf{M y})_1 < x_{1H}$ (the trajectory must be moving away from this hyperplane, since all the focal points are strictly contained in the other hyperplane), hence $\mathcal{W}_1$ must map into itself. Further, easily check that no points in $\mathcal{W}_1$ would map into the relative boundary of either $\mathcal{W}$ or the hyperplane $ x_1 = x_{1H}$. Thus, we have that $\mathbf{M}$ maps $\mathcal{W}_1$ into its relative interior.

    The $x_1$ coordinate of any point in $\mathcal{W}_2$ will be strictly decreasing in the return map, hence, there can be no fixed point in $\mathcal{W}_2$. If If $x_1$ is not bounded in the negative direction, we treat it similarly. Repeat the process for other $x_i$, for the subset of $\mathcal{W}$ that maps into its relative interior, and we have constructed a suitable box $\mathcal{H}$ that have its vertices: $\prod_{i=1}^{n-1} \{ x_{iL} \text{ or } x_{iH} \}\times \theta_{n}^j$.

Thus, applying Lemma \ref{lemma_2.9} to the box $ \mathcal{H}$, we arrive at the following Theorem:
\begin{theorem}
\label{theorem_5.7}
    Given an ideal cyclic attractor $\mathcal{C}$ in $\mathbb{R}^n$, the cyclic attractor always has a unique and asymptotically stable periodic orbit.
\end{theorem}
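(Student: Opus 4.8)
The plan is to reduce the statement directly to Lemma \ref{lemma_2.9} by exhibiting a compact convex set $\mathcal{H}$ inside the Poincaré section $\mathcal{W}$ whose affine hull misses the origin and which the return map $\mathbf{M}$ sends into its own relative interior. The corollary preceding this theorem already gives the essential ingredient: in an ideal cyclic attractor, $\mathbf{M}\,\overline{\mathcal{W}} \subset \relint(\mathcal{W})$, because any point mapping to $\partial\mathcal{W}$ would, by Proposition \ref{prop_5.5} applied inductively, have to lie on the spine $\mathcal{S}(\mathcal{C})$, which is empty. So when $\mathcal{W}$ is bounded I would simply take $\mathcal{H} = \overline{\mathcal{W}}$: it is compact and convex, $\mathbf{M}\mathcal{H}\subset\relint(\mathcal{H})$ holds by the corollary, and the affine hull of $\mathcal{W}$ (a wall at $x_n = \theta_n^j$) does not pass through the origin because the embedded phase space has been arranged, via the remark at the end of subsection \ref{finding_fp}, to avoid the origin. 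Lemma \ref{lemma_2.9} then yields a unique and stable fixed point, hence a unique and asymptotically stable periodic orbit.

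The only real work is the unbounded case, and here I would carry out the box construction already sketched in the excerpt, treating one coordinate at a time. For a coordinate $x_i$ that is unbounded in, say, the positive direction, I would cut $\mathcal{W}$ along the hyperplane $x_i = x_{iH}$ with $x_{iH}$ chosen strictly larger than every focal-point coordinate $f_i$ of every region in $\mathcal{C}$, and take $x_{iL} = \theta_i^{J_i}$ (the topmost threshold, which bounds $\mathcal{W}$ from below in that coordinate). The key observation to verify is that on the sublevel slab $\{y_i \le x_{iH}\}$ the $i$-th coordinate is strictly contracted toward the interior under $\mathbf{M}$: since every relevant focal point lies strictly below the cut, the straight-line segments of the trajectory move $x_i$ strictly away from $x_{iH}$, so $(\mathbf{M}y)_i < x_{iH}$ and the slab maps into itself. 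Conversely, on $\{y_i > x_{iH}\}$ the coordinate $x_i$ strictly decreases, so no fixed point can survive there. Iterating this trimming over all unbounded coordinates produces the box $\mathcal{H} = \prod_{i=1}^{n-1}\{x_{iL}\text{ or }x_{iH}\}\times\{\theta_n^j\}$, which is compact and convex.

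Finally I would combine the two mechanisms: on $\mathcal{H}$ the coordinates that were bounded inherit $\mathbf{M}\,\overline{\mathcal{W}}\subset\relint(\mathcal{W})$ from the corollary, while the previously unbounded coordinates are pushed strictly inward by the slab argument, so together $\mathbf{M}\mathcal{H}\subset\relint(\mathcal{H})$. The affine hull of $\mathcal{H}$ still misses the origin, so Lemma \ref{lemma_2.9} applies verbatim and gives the unique stable fixed point of $\mathbf{M}$, equivalently the unique asymptotically stable periodic orbit of $\mathcal{C}$.

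I expect the main obstacle to be making the slab-contraction claim fully rigorous in the simultaneous presence of several coordinates: one must check that trimming in coordinate $x_i$ does not destroy the inward-mapping property already secured in coordinate $x_{i'}$, and that the finitely many straight segments composing $\mathbf{M}$ (one per region of the cycle) each respect the relevant inequality. This is the componentwise verification the excerpt defers, and the delicate point is confirming that the upper bound $x_{iH}$, chosen above all focal points, genuinely forces strict decrease across every leg of the cycle rather than merely on the final one; asymptotic boundedness of Glass-network trajectories is what underwrites this, but it should be spelled out leg by leg.
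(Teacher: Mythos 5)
Your proposal is correct and follows essentially the same route as the paper: it invokes the corollary $\mathbf{M}\,\overline{\mathcal{W}} \subset \relint(\mathcal{W})$ (derived from Proposition \ref{prop_5.5} and the emptiness of the spine), applies Lemma \ref{lemma_2.9} directly when the wall is bounded, and in the unbounded case performs the same coordinate-by-coordinate trimming with cuts placed strictly beyond all focal-point coordinates to obtain a compact convex box to which Lemma \ref{lemma_2.9} applies. The concerns you raise at the end (checking that trimming in one coordinate preserves the inward-mapping property in the others, and that the decrease holds leg by leg) are exactly the componentwise verifications the paper leaves implicit, so there is no substantive divergence.
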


\subsection{Non-ideal cyclic attractor in a general Glass network}
\label{ssection_5.3}
\begin{figure}[!ht]
\centering
\includegraphics[width=0.9\textwidth]{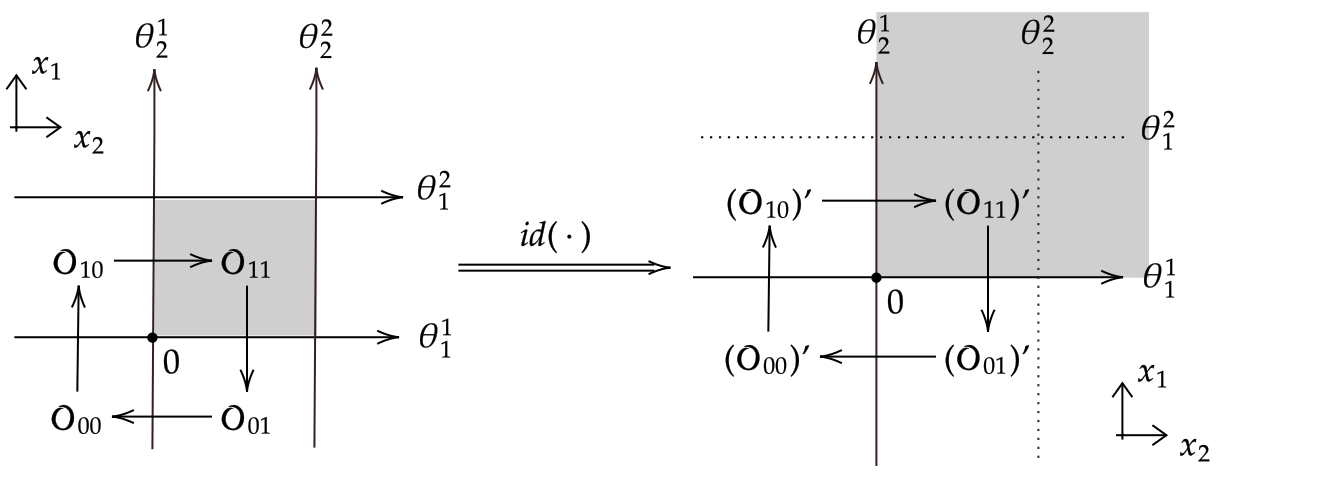}
\caption{\label{fig:diagram-5} The cyclic attractor $\mathcal{C}: \mathcal{O}_{00} \rightarrow \mathcal{O}_{10} \rightarrow \mathcal{O}_{11} \rightarrow \mathcal{O}_{01}\rightarrow \mathcal{O}_{00}$ is an example of a non ideal cyclic attractor. We ``embed'' it in a binary Glass network of the same dimension, such that the dynamics of the two are identical when restricted to the region $\mathcal{O}_{00} \cup \mathcal{O}_{10} \cup \mathcal{O}_{11} \cup \mathcal{O}_{01}$. Notice that while $\mathcal{C}':\mathcal{O}_{00}' \rightarrow \mathcal{O}_{10}' \rightarrow \mathcal{O}_{11}' \rightarrow \mathcal{O}_{01}'\rightarrow \mathcal{O}_{00}'$ is also a cyclic attractor, the two are not the same: for example, the region $\mathcal{O}_{11}'$ is much larger than $\mathcal{O}_{11}$. Our task is to show that if there is a periodic orbit in $\mathcal{C}'$, then that periodic orbit must also lie in $\mathcal{C}$. The reverse direction is trivial.}
\end{figure}

We now turn to the non-ideal case, which resembles a original cyclic attractor. Indeed, this would be the strategy we employ.
We use our usual setting of a cycle $C$ from $\mathcal{O}_1 \rightarrow \mathcal{O}_2 \rightarrow \dots \rightarrow \mathcal{O}_k \rightarrow \mathcal{O}_1 $.

Now, by Proposition \ref{prop_2.5}, there is at least one vertice $a$ of the network, such that $a$ belongs to all the closures of regions in $\mathcal{C}$. Without loss of generality, let $a=0$ (otherwise, we can just transform affinely our system by $-a$). Let ${(\mathbb{R}^n)}'$ be an $n$ dimensional real space distinct from $\mathbb{R}^n$. Let $id$ be a function from $\mathbb{R}^n$ to ${(\mathbb{R}^n)}'$, $id(x) = x$ (this is just a convenient function that help us determine which system we are working in). 

If $\mathbb{R}^n$ is our old system, we will define a new binary Glass network in ${(\mathbb{R}^n)}'$ (to distinguish it from the old system) as follow:
\begin{enumerate}
    \item If an orthant $\mathcal{O}'$ of ${(\mathbb{R}^n)}'$ fully contains a region $id(\mathcal{O}_i)$, denote that orthant $\mathcal{O}'_i$, and its focal point is the focal point of $\mathcal{O}_i$, ${f'}^i = id(f^i)$.
    \item Otherwise, that orthant is unimportant to our analysis, and we can pick its focal point to be any point satisfying the pre-requisite conditions.
\end{enumerate}

One could think about the two systems $\mathbb{R}^n$ and ${(\mathbb{R}^n)}'$ as overlapping each other, albeit distinct from each other. However, they behaves identically in the regions correspond to $\mathcal{C}$, $id(\mathcal{C})$ (which we will prove shortly). We then show that the cycle of orthants $\mathcal{C}'$ in ${(\mathbb{R}^n)}'$, $\mathcal{O}'_1 \rightarrow \mathcal{O}'_2 \rightarrow \dots \rightarrow \mathcal{O}'_k \rightarrow \mathcal{O}'_1 $ is a cyclic attractor in ${(\mathbb{R}^n)}'$. Thus, we can use results we have proven for this case in Section 5.1. We then attempt to reconcile the possible cases of Theorem 5.3 with the original cyclic attractor $\mathcal{C}$ in ${\mathbb{R}^n}$.

\begin{prop}
    If $x \in \mathcal{C}$, then $id(x(t))=(id(x))(t)$, where $(id(x))(t)$ is the trajectory starting at $id(x)$ in ${(\mathbb{R}^n)}'$, for any time $t$ in the maximal solution of both $x$ and $x'$
\end{prop}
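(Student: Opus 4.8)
The plan is to follow the template of the two earlier commutation results (the proposition for $\Phi$ and the one for $\Psi$): verify the identity coordinate-by-coordinate while both trajectories remain in corresponding cells, invoke uniqueness of solutions to the linear ODE, and then propagate across wall crossings by induction along the cycle. Since $id$ is literally the identity on points, the content of the statement is not that $id$ distorts anything, but that the \emph{two different vector fields} --- the original general network on $\mathbb{R}^n$ and the binary network on ${(\mathbb{R}^n)}'$ --- agree along the relevant trajectory.

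First I would fix $x \in \mathcal{O}_i$, so that by construction $id(x) \in \mathcal{O}'_i$, and note that the two governing equations coincide: in $\mathcal{O}_i$ the original flow is $\dot y = f^i - y$, while in $\mathcal{O}'_i$ the binary flow is $\dot{y} = {f'}^i - y = f^i - y$ because ${f'}^i = id(f^i)$. With the common initial condition $id(x) = x$, uniqueness of the solution to $\dot y = f^i - y$ forces $id(x(t)) = (id(x))(t)$ for as long as $x(t)$ stays in $\mathcal{O}_i$ and $(id(x))(t)$ stays in $\mathcal{O}'_i$.

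The crucial geometric observation that makes the induction go through is that the common vertice $a = 0$ supplied by Proposition \ref{prop_2.5} forces every transition wall of the cycle to sit at a coordinate value $0$. Indeed, if $\mathcal{O}_i$ and $\mathcal{O}_{i+1}$ are adjacent across $\mathcal{W}_i$ with switching variable $x_m$, their $m$-th coordinate intervals share exactly the threshold $\theta_m^j$; for $0$ to lie in both closures it must be this shared endpoint, i.e. $\theta_m^j = 0$. Hence $\mathcal{W}_i$ coincides, on the relevant part, with the binary threshold $x_m = 0$. Combined with Proposition \ref{prop_2.4}, which guarantees a \emph{unique} switching coordinate in $\mathcal{O}_i$, this lets me analyse the crossing componentwise: the switching coordinate $x_m$ reaches $0$ at the same instant and from the same side in both systems, while every other coordinate stays strictly inside its interval and therefore strictly on one side of $0$. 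Thus the two trajectories hit their walls simultaneously and, coordinate by coordinate, land in $\mathcal{O}_{i+1}$ and $\mathcal{O}'_{i+1}$ respectively, consistent with $id(\mathcal{O}_{i+1}) \subset \mathcal{O}'_{i+1}$.

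The main obstacle is precisely this wall-matching step: a priori the binary thresholds are all at $0$, whereas the original network has its walls at arbitrary thresholds $\theta_m^j$, so one must rule out the possibility that the binary trajectory crosses into a \emph{different} orthant than the one containing $id(\mathcal{O}_{i+1})$. The vertice argument above removes this obstacle by pinning the active wall to $x_m = 0$, and Proposition \ref{prop_2.4} ensures no spurious crossing occurs in the inactive coordinates. Once a single crossing is handled, I would induct along $\mathcal{O}_1 \rightarrow \cdots \rightarrow \mathcal{O}_k \rightarrow \mathcal{O}_1$, using continuous extension of the flow onto each wall exactly as in the proof of Proposition \ref{prop_5.1}, to conclude $id(x(t)) = (id(x))(t)$ for all $t$ in the common maximal interval of definition; the restriction to that common interval is what lets me ignore what either vector field does once a trajectory would leave the cycle region.
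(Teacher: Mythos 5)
Your proposal is correct and follows essentially the same route as the paper's proof: uniqueness of solutions of $\dot y = f^i - y$ within corresponding cells, followed by induction across the wall crossings of the cycle. The only difference is that you explicitly justify the simultaneity of the crossings (the common vertice $a=0$ forces each switching threshold $\theta_m^j$ of the cycle to equal $0$, so the original wall coincides with the binary wall $x_m=0$ and the inactive coordinates stay strictly on one side of $0$), a step the paper dismisses with ``easily see''; this is a welcome filling-in of detail rather than a different argument.
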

\begin{proof}
    Without loss of generality, let $x\in \mathcal{O}_1$. Then $id(x) \in id(\mathcal{O}_1) \subset \mathcal{O}'_1$. Since the ODE equations of $x$ and $id(x)$ are identical, by uniqueness of solution to ODE, we have that $id(x(t))=(id(x))(t)$, as long as $x(t)$ remains in $\mathcal{O}_1$, and $x'(t)$ remains in $\mathcal{O}'_1$. But easily see that the two trajectories switch to $\mathcal{O}_2$ and $\mathcal{O}'_2$ at the same time, denote $t^*$, and at that time, $id(x(t^*))=(id(x))(t^*)$. Thus, by induction, we have that $id(x(t))=(id(x))(t)$, as long as two trajectories are well-defined.
\end{proof}

\begin{lemma}
    The cycle of orthants $\mathcal{C}'$ is a cyclic attractor in ${(\mathbb{R}^n)}'$.
\end{lemma}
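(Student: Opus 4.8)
The plan is to verify the definition of a cyclic attractor directly for $\mathcal{C}'$: I must show that every trajectory starting in an orthant $\mathcal{O}'_i$ of the binary network in ${(\mathbb{R}^n)}'$ unambiguously crosses into $\mathcal{O}'_{i+1}$. The whole argument rests on transferring the local structure of $\mathcal{C}$, encoded in Proposition \ref{prop_2.4}, onto the inherited focal points ${f'}^i = id(f^i)$.

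First I would pin down the geometry of the orthants $\mathcal{O}'_i$ relative to the chosen vertex $a = 0$. Since, by Proposition \ref{prop_2.5}, $a \in \overline{\mathcal{O}_i}$ for every $i$ and $a$ is a vertex, in each coordinate the interval of $\mathcal{O}_i$ must have $0$ as an endpoint; hence every $\mathcal{O}_i$ is one of the regions immediately adjacent to $0$, lies in a single orthant (so that $\mathcal{O}'_i$ is well defined), and distinct $\mathcal{O}_i$ carry distinct sign patterns (so the $\mathcal{O}'_i$ are genuinely distinct and $\mathcal{C}'$ is an honest cycle). Moreover, if $x_m$ is the variable switching across the wall $\mathcal{W}_i$ between $\mathcal{O}_i$ and $\mathcal{O}_{i+1}$ at threshold $\theta_m^j$, then $a \in \overline{\mathcal{O}_i} \cap \overline{\mathcal{O}_{i+1}}$ pins the $m$th coordinate to $a_m = \theta_m^j = 0$. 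Consequently $\mathcal{O}'_i$ and $\mathcal{O}'_{i+1}$ differ precisely in the sign of coordinate $m$, and the relevant wall of the binary network sits exactly at $x_m = 0$.

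Next I would invoke Proposition \ref{prop_2.4} on $\mathcal{O}_i$: the only switching coordinate of $f^i$ is $m$. For $l \neq m$ this means $f^i_l$ lies in the same interval as $\mathcal{O}_i$, hence on the same side of $0$; since ${f'}^i_l = f^i_l$, coordinate $l$ does not switch in the binary network either, i.e. $l \notin I^+(\mathcal{O}'_i) \cup I^-(\mathcal{O}'_i)$. For coordinate $m$, the focal-point hypothesis imposed on the cyclic attractor $\mathcal{C}$ places $f^i_m$ strictly on the $\mathcal{O}_{i+1}$-side of $\theta_m^j = 0$, and as ${f'}^i_m = f^i_m$, the inherited focal point sits on the $\mathcal{O}'_{i+1}$-side. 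Thus $I^+(\mathcal{O}'_i) \cup I^-(\mathcal{O}'_i) = \{m\}$, directed toward $\mathcal{O}'_{i+1}$.

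Finally I would close the argument dynamically: a trajectory in $\mathcal{O}'_i$ is $y(t) = {f'}^i + (y(0) - {f'}^i)e^{-t}$, which approaches ${f'}^i$ monotonically in each coordinate. In coordinate $m$ it crosses $0$ in finite time (since ${f'}^i_m$ is on the opposite side), while in every coordinate $l \neq m$ it remains strictly on its original side (since ${f'}^i_l$ is), so the first — and only — wall encountered is $x_m = 0$, carrying the trajectory into $\mathcal{O}'_{i+1}$. Running this around the cycle for every $i$ shows $\mathcal{C}'$ satisfies the definition of a cyclic attractor. I expect the genuine work to be the first step: checking that the vertex $a = 0$ simultaneously forces the wall threshold to $0$ and keeps the non-switching coordinates of each $f^i$ on the correct side of the origin, so that Proposition \ref{prop_2.4} transfers verbatim to the binary system; once that sign alignment is secured, the flow conclusion is immediate.
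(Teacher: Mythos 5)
Your proof is correct and follows the same route as the paper, which simply says to check the coordinates of each ${f'}^i$ and conclude ${f'}^i \in \mathcal{O}'_{i+1}$; your argument is exactly that coordinate check, carried out in full (including the useful preliminary observations that the vertex $a=0$ forces each $\mathcal{O}_i$ into a single well-defined orthant with a distinct sign pattern and pins the switching threshold to $0$, points the paper leaves implicit).
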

\begin{proof}
    One can see this easily by considering the coordinates of each ${f'}^i$. Specifically, one can show that ${f'}^i \in \mathcal{O}'_{i+1}$, and thus, $\mathcal{C}'$ is a cyclic attractor.
\end{proof}

Let $\mathcal{W}_0$ the Poincare section, and $M$ the Poincare return map of the cycle of orthants $\mathcal{C}$ in $\mathbb{R}^n$, and let $\mathcal{W}_0 '$ the Poincare section, and $M'$ the Poincare return map of the cycle of orthants $\mathcal{C}'$ in ${(\mathbb{R}^n)}'$. 

\begin{prop}
     If $x^*$ is a fixed point of $M$, then $id(x^*)$ is a fixed point of $M'$.
\end{prop}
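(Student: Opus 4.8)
The plan is to establish the intertwining relation $id \circ M = M' \circ id$ on the domain of $M$, exactly mirroring the relation $\Psi(Mx) = M'(\Psi(x))$ used for the non full-rank case in Subsection \ref{ssection_5.1}. Once this is in hand the conclusion is immediate: applying it to a fixed point $x^* = Mx^*$ gives $M'(id(x^*)) = id(Mx^*) = id(x^*)$, so $id(x^*)$ is fixed by $M'$. As a preliminary bookkeeping step I would record that $id$ carries the cycle data of $\mathcal{C}$ into that of $\mathcal{C}'$ compatibly: since $id(\mathcal{O}_i) \subset \mathcal{O}'_i$ by construction, the wall $\mathcal{W}_i$ separating $\mathcal{O}_i$ from $\mathcal{O}_{i+1}$ is sent into the wall $\mathcal{W}'_i$ separating $\mathcal{O}'_i$ from $\mathcal{O}'_{i+1}$; in particular $id(\mathcal{W}_0) \subset \mathcal{W}'_0$, so $id$ maps the Poincar\'e section of $\mathcal{C}$ into that of $\mathcal{C}'$. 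This is a coordinate-wise check of the same flavour as the verification that $\mathcal{C}'$ is a cyclic attractor.

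The main step is to push a single trajectory through the whole cycle. Fix $x$ in the domain of $M$, so the trajectory $x(t)$ passes through $\mathcal{O}_1, \ldots, \mathcal{O}_k$, crosses the intermediate walls $\mathcal{W}_1, \ldots, \mathcal{W}_{k-1}$ at successive times $t_1 < \cdots < t_{k-1}$, and returns to $\mathcal{W}_0$ at time $t_k$ with $Mx = x(t_k)$. By the trajectory-correspondence Proposition proven just above, $(id(x))(t) = id(x(t))$ for every $t$ in the common interval of existence. Hence the $(\mathbb{R}^n)'$-trajectory starting at $id(x)$ stays in $id(\mathcal{C}) \subset \mathcal{C}'$, passes through $\mathcal{O}'_1, \ldots, \mathcal{O}'_k$ in the same order, and crosses the walls $\mathcal{W}'_i$ at exactly the same times $t_i$. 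Its first return to $\mathcal{W}'_0$ therefore occurs at time $t_k$, at the point $id(x(t_k)) = id(Mx)$, which is precisely $M'(id(x)) = id(Mx)$.

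The point that needs care is that the orthants $\mathcal{O}'_i$ are strictly larger than $id(\mathcal{O}_i)$ (see Figure \ref{fig:diagram-5}), so a priori the $(\mathbb{R}^n)'$-flow could either diverge from $id(x(t))$ once the $\mathbb{R}^n$-trajectory approaches $\partial\mathcal{C}$, or return to the enlarged section $\mathcal{W}'_0$ at some time earlier than $t_k$. Both worries are dissolved by the correspondence itself. First, $(id(x))(t) = id(x(t))$ holds on the whole interval on which $x(t)$ is defined and remains in $\mathcal{C}$, and for $x$ in the domain of $M$ this interval already covers one full traversal $[0,t_k]$; so the $(\mathbb{R}^n)'$-solution is defined and equals $id(x(t))$ up to $t_k$ and cannot have left $\mathcal{C}'$ beforehand. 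Second, the switching coordinate defining $\mathcal{W}'_0$ is the same as that defining $\mathcal{W}_0$, and it obeys an identical scalar ODE in both systems; hence any time at which $id(x(t))$ meets $\mathcal{W}'_0$ is a time at which $x(t)$ meets $\mathcal{W}_0$, and since $x(t)$ does not meet $\mathcal{W}_0$ for $0 < t < t_k$, neither does $id(x(t))$. This pins the first return at $t_k$ and completes the verification of $id \circ M = M' \circ id$, from which the statement follows at once.
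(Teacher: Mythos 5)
Your proposal is correct and follows essentially the same route as the paper: both rely on the trajectory-correspondence proposition $id(x(t))=(id(x))(t)$, the inclusion $id(\mathcal{W}_0)\subset\mathcal{W}'_0$, and the simultaneity of the returns to the two Poincar\'e sections to conclude $M'(id(x^*))=id(Mx^*)=id(x^*)$. Your version merely makes explicit the intertwining relation $id\circ M=M'\circ id$ and the check that the primed trajectory cannot return to the enlarged section $\mathcal{W}'_0$ early, a point the paper passes over implicitly.
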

\begin{proof}
    The trajectory starting at $x^*$, $x^*(t)$, is contained in $\mathcal{C}$, thus, by Proposition 5.4, $id(x^*(t))=(id(x^*))(t)$. Notice that $id(\mathcal{W}_0) \subset \mathcal{W}'_0$; also, from the proof of Proposition 5.4, $x^*(t)$ and $id(x^*)(t)$ return to $\mathcal{W}_0$ and $\mathcal{W}_0 '$ after one cycle at the same time, denote this time $t^*$. Thus, we have
    \begin{align}
        M' (id(x^*)) &= id(x^*)(t^*) \\
        &= id(x^*(t^*)) \\
        &= id(M x^*) \\
        &= id(x^*),
    \end{align}
    $id(x^*)$ must be a fixed point of $M'$.
\end{proof}

Thus, once we have identified all the possible fixed point of $M'$, we can use the function $id^{-1}$ to examine which point could be a fixed point of $M$. But $M'$ is a Poincare return map of the cyclic attractor $\mathcal{C}'$, and by Theorem \ref{theorem_5.3}, either $M'$ has a unique and stable fixed point in $\mathcal{W}_0'$, or it has no fixed point. The latter immediately implies that $M$ has no fixed point in $\mathcal{W}_0'$, but what about the potential fixed point in $\mathcal{W}_0'$? We have that $id(\mathcal{W}_0) \subset \mathcal{W}'_0$, and this point (after applying $id^{-1}$) might lie outside of $\mathcal{W}_0)$. We shall see that this cannot be the case.

The map $M'$ can be expressed in the form of a linear fractional map as $M' x = \frac{A x}{1+ \langle \psi,x,\rangle}$, with $A$ an $n\times n$ matrix, and $\psi$ a column vector of length $n$, since the binary Glass network in ${(\mathbb{R}^n)}'$ is centred at $0$. If there is a fixed point $x^*$ inside $\mathcal{W}_0'$, by proposition 14 in \cite{Edwards}, it must lie on an eigenvector $(v)$ of $A$ that corresponds to an eigenvalue $\lambda$.

Assume that $x^*$ lies outside $id(\overline {\mathcal{W}_0 })$, let $x'$ be the intersection different from $0$ of $(v)$ and $id(\overline {\mathcal{W}_0 })$ (this point must exist because $(v)$ lies in the interior of $\mathcal{W}_0'$). \\
Since $\mathcal{C}$ is a cyclic attractor, and $id^{-1}(x') \in \overline{\mathcal{C}}$, implicitly continuous extending the map $M$ to $\overline {\mathcal{W}_0 }$, we have that $M (id^{-1}(x')) \in \overline{\mathcal{W}_0}$. Thus, $M' x' \in id(\overline {\mathcal{W}_0 })$. But from the analysis of the dynamic on the eigenvector $(v)$ carried out in \cite{Edwards} (proposition 15), $M' x'$ has to move toward $x^*$, thus, it must lie strictly outside $\overline{\mathcal{W}_0}$, a contradiction. Hence, this cannot be the case.

$x^*$ cannot lie on the relative boundary of $id( {\mathcal{W}_0 })$, otherwise, translating back to the general Glass network in $\mathbb{R}^n$, this would mean there are two variables on the trajectory starting at $ id^{-1}(x^*)$ that switch at the same time. But this is a contradiction since $\mathcal{C}$ is a cyclic attractor. Hence, if there is a fixed point of $M'$, it must correspond to a fixed point of $M$. Consequently, for this case, we derived at the same conclusion as in Theorem \ref{theorem_5.3}.

\begin{theorem}
\label{theorem_5.11}
    Given a cyclic attractor $\mathcal{C}$ in a general Glass network, we have two possibilities:
    \begin{enumerate}
        \item The cyclic attractor has a unique and asymptotically stable periodic orbit.
        \item All trajectories in the cyclic attractor will converge to its spine (degenerate periodic orbits).
    \end{enumerate}
    The cases, as well as the periodic orbit, can be determined and calculated. 
\end{theorem}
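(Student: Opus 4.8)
The plan is to establish the dichotomy by a single case split according to whether $\mathcal{C}$ is ideal or non-ideal; these alternatives are exhaustive and mutually exclusive, so it suffices to treat each one and to check that in both situations the conclusion collapses to exactly one of the two stated alternatives. Most of the heavy lifting has already been arranged in Sections~\ref{ssection_5.1}--\ref{ssection_5.3}, so the proof is largely a matter of assembling those pieces and verifying that the reductions are faithful.

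If $\mathcal{C}$ is ideal, I would simply invoke Theorem~\ref{theorem_5.7}: the spine is empty, so no degenerate orbit can arise, and we land in alternative~(1) with a unique asymptotically stable periodic orbit. If $\mathcal{C}$ is non-ideal, I would use Proposition~\ref{prop_2.5} to select a vertex $a$ common to the closures of all regions of $\mathcal{C}$, translate so that $a=0$, and erect the auxiliary binary Glass network on $(\mathbb{R}^n)'$ centred at the origin exactly as in Section~\ref{ssection_5.3}. The trajectory-matching proposition guarantees that the two flows coincide on $\mathcal{C}$ and $id(\mathcal{C})$, and the induced cycle $\mathcal{C}'$ is a (generally non full-rank) binary cyclic attractor; hence Theorem~\ref{theorem_5.3} applies to it and yields the same dichotomy for the return map $M'$: either a unique stable fixed point, or every trajectory converging to $\mathcal{S}(\mathcal{C}')$.

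The crux, and the step I expect to be the main obstacle, is transferring each alternative for $\mathcal{C}'$ back to $\mathcal{C}$, because the section $\mathcal{W}_0'$ is strictly larger than $id(\overline{\mathcal{W}_0})$ and a fixed point of $M'$ could a priori escape the part that corresponds to the original attractor. To rule this out I would argue by contradiction along the lines indicated in Section~\ref{ssection_5.3}: a fixed point $x^*$ of $M'$ lies on a dominant eigenray $(v)$ of $A$; were $x^*$ outside $id(\overline{\mathcal{W}_0})$, the intersection $x'$ of $(v)$ with $id(\overline{\mathcal{W}_0})$ would, by invariance of $\overline{\mathcal{C}}$ under the continuously extended map, satisfy $M'x' \in id(\overline{\mathcal{W}_0})$, whereas the one-dimensional dynamics on the eigenray drive $M'x'$ toward $x^*$ and hence strictly out of $\overline{\mathcal{W}_0}$, a contradiction. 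I would then exclude $x^*$ from the relative boundary of $id(\mathcal{W}_0)$ via Proposition~\ref{prop_2.4}, since a boundary point would force two variables to switch simultaneously, contradicting that $\mathcal{C}$ is a cyclic attractor. This pins $x^*$ in $\relint id(\mathcal{W}_0)$, so $id^{-1}(x^*)$ is a genuine fixed point of $M$ and we are in alternative~(1). In the complementary sub-case, where $M'$ has no fixed point and all trajectories of $\mathcal{C}'$ converge to $\mathcal{S}(\mathcal{C}')$, I would invoke the trajectory identification together with the invariance of $\overline{\mathcal{C}}$ to push this convergence down to $\mathcal{S}(\mathcal{C})$, giving alternative~(2). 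Combining the ideal and non-ideal conclusions yields the dichotomy, and the determinability claim is immediate since $A$, its dominant eigenray, and the relevant sections and boxes are all explicitly constructible.
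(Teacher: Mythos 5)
Your proposal is correct and follows essentially the same route as the paper: the ideal case is dispatched by Theorem~\ref{theorem_5.7}, and the non-ideal case uses the vertex from Proposition~\ref{prop_2.5}, the auxiliary binary network on $(\mathbb{R}^n)'$, Theorem~\ref{theorem_5.3} for $\mathcal{C}'$, and the same eigenray/boundary contradiction to force the fixed point of $M'$ into $id(\mathcal{W}_0)$. The only cosmetic difference is that you fold the ideal case into this theorem explicitly, whereas the paper defers that combination to Theorem~\ref{theorem_5.12}.
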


\subsection{Summarising the results}
\label{ssection_5.4}
We will summarise the results of Theorems \ref{theorem_5.3}, \ref{theorem_5.7}, and \ref{theorem_5.11} into one big theorem that gives a complete classification of the topological features of the flow in any cyclic attractor in both the binary and the general Glass network.

\begin{theorem}
\label{theorem_5.12}
    Given a cyclic attractor $\mathcal{C}$ in a general Glass network, the topological features of the flow depends on the classification of $\mathcal{C}$. If $\mathcal{C}$ is an ideal cyclic attractor, $\mathcal{C}$ always has a unique and asymptotically stable periodic orbit. If $\mathcal{C}$ is a non-ideal cyclic attractor, there are two possibilities:
    \begin{enumerate}
        \item The cyclic attractor has a unique and asymptotically stable periodic orbit.
        \item All trajectories in the cyclic attractor will converge to its spine (degenerate periodic orbits).
    \end{enumerate}
    The cases, as well as the periodic orbit, can be determined and calculated. 
\end{theorem}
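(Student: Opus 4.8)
The plan is to treat Theorem~\ref{theorem_5.12} as a synthesis theorem rather than proving anything new. Every cyclic attractor is, by definition, either \emph{ideal} (when $\mathcal{S}(\mathcal{C}) = \emptyset$) or \emph{non-ideal} (when $\mathcal{S}(\mathcal{C}) \neq \emptyset$), and these two regimes are mutually exclusive and jointly exhaustive. So first I would split on this classification and then dispatch each case to the theorem already established for it, doing no further analysis within the cases themselves.

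For the ideal case I would invoke Theorem~\ref{theorem_5.7} directly, which yields a unique and asymptotically stable periodic orbit. The one genuinely conceptual point to record is \emph{why} the degenerate alternative cannot arise: a degenerate periodic orbit is by definition a single trivial fixed point lying on the spine $\mathcal{S}(\mathcal{C})$, and in an ideal attractor $\mathcal{S}(\mathcal{C}) = \emptyset$, so there is simply nowhere for such an orbit to live. Concretely this is underwritten by the corollary preceding Theorem~\ref{theorem_5.7}, which gives $\mathbf{M}\,\overline{\mathcal{W}} \subset \relint(\mathcal{W})$, after which Lemma~\ref{lemma_2.9} (applied to the bounded wall, or to the compact box $\mathcal{H}$ built for an unbounded wall) isolates exactly one fixed point strictly in the interior, excluding case~2.

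For the non-ideal case I would cite Theorem~\ref{theorem_5.11} verbatim, which already delivers the dichotomy: either a unique asymptotically stable periodic orbit, or convergence of all trajectories to $\mathcal{S}(\mathcal{C})$. The substantive work behind this statement --- embedding $\mathcal{C}$ into a binary Glass network of the same dimension about a common vertex $a$ (normalised to the origin), checking that the companion cycle $\mathcal{C}'$ is itself a cyclic attractor, applying Theorem~\ref{theorem_5.3}, and then reconciling by showing that any fixed point of the companion return map $M'$ must in fact lie in $id(\overline{\mathcal{W}_0})$ and hence correspond to a genuine fixed point of $M$ --- has all been carried out in subsection~\ref{ssection_5.3}, so nothing new is needed here.

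Finally, for the assertion that the cases and the orbit ``can be determined and calculated,'' I would appeal to the constructive character of the cited arguments: via the embedding $\Phi$ the return map is realised as an explicit linear fractional map $\mathbf{M}x = Ax/(1+\langle\psi,x\rangle)$, so the dichotomy is decided by locating the dominant eigenvector of $A$ (guaranteed and computable by the Birkhoff extension, Theorem~\ref{Birkhoff}) and testing whether the corresponding ray meets the Poincare section in its interior or only on the spine. I do not expect a real obstacle: the only care required is to confirm that the classification is exhaustive and that the ideal case excludes degeneracy, both immediate from $\mathcal{S}(\mathcal{C}) = \emptyset$. All the genuine difficulty was absorbed into Theorems~\ref{theorem_5.3}, \ref{theorem_5.7}, and~\ref{theorem_5.11}, and the role of this final statement is purely to assemble them into a single classification.
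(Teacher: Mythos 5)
Your proposal is correct and matches the paper exactly: the paper offers no separate argument for Theorem~\ref{theorem_5.12}, presenting it purely as an assembly of Theorems~\ref{theorem_5.3}, \ref{theorem_5.7}, and~\ref{theorem_5.11} via the exhaustive ideal/non-ideal split on whether $\mathcal{S}(\mathcal{C})=\emptyset$. Your added remarks on why degeneracy is excluded in the ideal case and on the computability of the dichotomy are faithful to the supporting subsections and add nothing that conflicts with the paper.
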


\begin{remark}
    The result of this work (Theorem \ref{theorem_5.12}) slightly overlap with a work of Etient, \cite{Etienne_2009}. In that paper, the authors allow many thresholds on each variable, as well as non-uniform decay rate for each variable. Thus, the class of models they examined is larger than ours. However, in order to arrive at a similar conclusion, the authors impose restriction on focal points: they now have to satisfy a condition called alignment. I would also like to acknowledge here an earlier work of Snoussi, \cite{Snoussi}, which is among the first works that examine the non-uniform decay rate.
\end{remark}

\section{Practical application aspect of our method}

Given a sequence of regions (which includes a cycle in the regions), for practical, computational purpose, it will be more computationally optimised to only ``decompose'' the thresholds that are involved, and leave the rest as they are. For example, in a non ideal cyclic attractor such as in Figure \ref{fig:diagram-5}, we can calculate explicitly the return map in $2$ dimension, without having to embed it in a $4$ dimension system.

\section{Acknowledgement}
This work is part of a summer research project that I carried out when I was at the University of Bath. I would like to express here my gratitude to Doctor Karol Bacik and Professor Jonathan Dawes for their help and guidance throughout the project, and the completion of this paper.

\bibliographystyle{plain}
\bibliography{main}
\end{document}